\newtheorem{theorem}{Theorem}[section]
\newtheorem{lemma}{Lemma}[section]
\newtheorem{corollary}{Corollary}[section]
\newtheorem{example}{\emph{Example}}[section]
\newcommand{\be}{\begin{equation}}
\newcommand{\ee}{\end{equation}}
\newcommand{\ba}{\begin{array}}
\newcommand{\ea}{\end{array}}
\newcommand{\beas}{\begin{eqnarray*}}
\newcommand{\eeas}{\end{eqnarray*}}
\newcommand{\bea}{\begin{eqnarray}}
\newcommand{\eea}{\end{eqnarray}}
\begin{document}
\title{ \bf Error Analysis of Virtual Element Method for the Poisson-Boltzmann Equation }

\author{Linghan Huang$^1$ ~Shi Shu$^2$~ Ying Yang$^{3,*}$
}

%

\footnotetext[1]{ School of Mathematics and Computational Science, Guilin University of Electronic Technology, Guangxi Colleges and Universities Key Laboratory of Data Analysis and Computation, Guangxi Applied Mathematics Center (GUET), Guilin, 541004, Guangxi, P. R. China. E-mail: 3466704709@qq.com}


\footnotetext[2]{ School of Mathematics and Computational Science, Xiangtan University, Hunan Key Laboratory for Computation and Simulation in Science and Engineering, Key Laboratory of Intelligent Computing and Information Processing of Ministry of Education, Xiangtan, 411105, Hunan, P. R. China. E-mail: shushi@xtu.edu.cn}

\footnotetext[3]
{$^*$ School of Mathematics and Computational Science, Guilin University of Electronic Technology, Guilin, Guangxi Colleges and Universities Key Laboratory of Data Analysis and Computation, Guangxi Applied Mathematics Center (GUET), 541004, Guangxi, P. R. China. E-mail: yangying@lsec.cc.ac.cn}





\date{}     
\maketitle

\noindent
\textbf{Abstract}:
 The Poisson-Boltzmann equation is a nonlinear elliptic equation with Dirac distribution sources, which has been widely applied to the prediction of electrostatics 
 potential of biological biomolecular systems in solution. In this paper, we discuss and analysis the virtual element method for the Poisson-Boltzmann equation on general polyhedral meshes. Under the low regularity of the solution of the whole domain, nearly optimal error estimates in both $L^2$-norm and $H^1$-norm for the virtual element approximation are obtained. The numerical experiment on different polyhedral meshes shows the efficiency of the virtual element method and verifies the proposed theoretical prediction.

\noindent
\textbf{Keywords}: Poisson-Boltzmann equation, optimal error estimates, virtual element method, polyhedral meshes.

\noindent

\section{Introduction}
\label{sec1}

The Poisson-Boltzmann equation (PBE) is a common tool in the study of biomolecular electrostatics, which provides an average field description of the electrostatic potential of biomolecular systems immersed in aqueous solutions \cite{Fogolari2002The}. In general, it is difficult to find the analytical solution of PBE and also a challenging task to solve it numerically, since it needs to deal with problems such as the strong nonlinearity, highly irregular interface, discontinuous coefficients and singular charges. Some numerical methods such as finite element (FE) method, finite difference (FD) method and boundary element (BE) method have been applied to solve PBE numerically. For example, Lu and McCammon \cite{Lu2007Improved} proposed an improved BE method to solve the PBE, which demonstrates considerable improvement in speed compared with the constant element and linear element methods. Chen, Holst and Xu \cite{Chen2007The} analyzed the FE approximation of the PBE and presented the first rigorous convergence result of the numerical discretization technique for the nonlinear PBE with delta distribution sources. Mirzadeh et. al. \cite{Mirzadeh2013Adaptive} presented an adaptive FD solver for the nonlinear PBE that uses non-graded, adaptive octree grids which drastically decrease memory usage and runtime without sacrificing accuracy compared to uniform grids. Kwon and Kwak \cite{Kwon2019Discontinuous} aimed at the PBE interface conditions, utilized the discontinuous bubble immersed FE method to obtain a discrete solution, and verified by examples that the optimal convergence rate was reached.

The commonly used discretization methods for PBE are based on triangular/quadrilateral or tetrahedral/hexahedral meshes. It would be more easier or flexible if polygonal/polyhedral mesh is applied in dealing with problems with the extremely irregular interface like PBE, but the implement of the traditional discretization method such as FD or FE method on such meshes would be difficult. For example, standard FE method can be applied in general polygons/polyhedras, but there are high requirements for the construction of shape functions. Recently, a generalized FE method called the virtual element method (VEM) is proposed by Veiga et. al. \cite{L2013Basic}, which can be used on arbitrary polygonal or polyhedral meshes. The novelty of the method lies in its ability to avoid constructing an explicit expression for the basis function. Instead, it only requires the appropriate degree of freedom to convert the discrete formulation into matrix form. This trait enables VEM to easily extend to higher order approximations and maintain robustness under general mesh types, even when dealing with complex geometries.
The VEM has been widely applied to solving many equations, such as second-order elliptic problems \cite{Antonietti2021Review,L2016Virtual,Cangiani2020Virtual,Cangiani2017Conforming}, stokes problems \cite{Kwak2022Formal,Liu2017Nonconforming,Manzini2021Virtual}, elastic equations \cite{Da2015Virtual,Nguyen2018Virtual,Van2022Mesh}, Maxwell equations \cite{Cao2021Virtual,Da2022Virtual}, plate bending problems \cite{Brezzi2013Virtual,Zhao2016Nonconforming} and Poisson-Nernst-Planck equations \cite{Liu2021Virtual}, etc. 



In this paper, we consider the VEM to solve the PBE. First, considering the singular term (Dirac distribution source) in the PBE, a decomposition technique is used to remove the singularities and leads to a regularized PBE (cf. \cite{Chen2007The}).
Then, we present the VEM discretization scheme for the homogeneous RPBE and analyze the numerical error for the VEM solution on general polyhedral meshes.
The main difficulties in the analysis for the RPBE include
the low global regularity of the solution and exponential rapid nonlinearities.
In order to present the error estimate of the solution with lower regularity, an interpolation error estimate is shown on arbitrary polyhedral meshes (see Lemma \ref{globalpi}), which is a generalization of the result on triangular meshes in \cite{Chen1998Finite}.
To deal with the exponential rapid nonlinear term, we follow the frame of the analysis in \cite{Cangiani2020Virtual}, but some properties of the nonlinear operator need to be studied in more details (see Lemma \ref{operatorB}), due to the difference of nonlinear forms compared with \cite{Cangiani2020Virtual}.
Finally, we show that both the $H^1$-norm and $L^2$-norm can reach the nearly optimal order with order $O(h|logh|^{1/2})$ and $O(h^2|logh|)$, respectively, when the interface is of $C^2$. The numerical example on three different polyhedral meshes verifies the theoretical convergence results and shows the efficiency of the virtual element method.

The outline of this paper is as follows. In Section 2, some preliminaries including the notations and some lemmas are presented. In Section 3, the detailed error estimates for the VEM solution are given. In Section 4, the numerical results are given to show the efficiency of the VEM and verify the proposed theoretical prediction.

\section{ Preliminaries }\label{sec2}
In this section, we shall first introduce some basic notations. For a open bounded domain $\mathcal{D} \subset \mathcal{R}^3$ with boundary $\partial \mathcal{D}$, we shall adopt the standard notations for Sobolev spaces $W^{s,p}(\mathcal{D})$ and their associated norms and seminorms. We denote $W^{s,p}_0=H^{s}_0(\mathcal{D})\bigcap W^{s,p}(\mathcal{D})$, for $p=2$, $H^s(\mathcal{D})=W^{s,2}(\mathcal{D})$, $H^1_0(\mathcal{D})=\{v\in H^1(\mathcal{D})|~v_{|_{\partial \mathcal{D}}}=0\}$, where $v_{|_{\partial \mathcal{D}}}=0$ is in the sense of trace, $\|\cdot\|_{s,p}= \|\cdot\|_{W^{s,p}(\mathcal{D})}$ (e.g. $\|\cdot\|_{0,\infty}=\|\cdot\|_{W^{0,\infty}(\mathcal{D})}$), and $(\cdot,\cdot)_{0,\mathcal{D}}$ is the standard $L^2$-inner product. For simplicity, $\|\cdot\|_{1}= \|\cdot\|_{W^{1,2}(\mathcal{D})}$ and $\|\cdot\|_{0}= \|\cdot\|_{L^2(\mathcal{D})}$.

 Concerning geometric objects (and related items), we shall use the following notations. For a geometric object $\mathcal{K}$ of dimension $d$, $1 \leq d \leq 3$
 (as an edge, or a face, or a polyhedron), we will denote by $\boldsymbol{x}_{\mathcal{K}}$, $h_{\mathcal{K}}$ and $|\mathcal{K}|$ the centroid, the diameter and the measure of $\mathcal{K}$, respectively. Moreover, for a non-negative integer $k$, let $\mathcal{P}_k(\mathcal{K})$ denote the space of polynomials of degree $\leq k$ on $\mathcal{K}$ and $\mathcal{M}_k(\mathcal{K})$ denote the set of scaled monomials
\[
    \mathcal{M}_k(\mathcal{K}):=\{m_{\boldsymbol{\alpha}} | m_{\boldsymbol{\alpha}} = \left( \frac{\boldsymbol{x}-\boldsymbol{x}_\mathcal{K}}{h_{\mathcal{K}}}\right)^{\boldsymbol{\alpha}},~0\leq |\boldsymbol{\alpha}|\leq k \},
\]
where, for a multi-index $\boldsymbol{\alpha} = (\alpha_1,...,\alpha_d)$, we denote, as usual, $|\boldsymbol{\alpha}|:=\alpha_1+\cdots+\alpha_d$ and $\boldsymbol{x}^{\boldsymbol{\alpha}}:=x_1^{\alpha_1}\cdots x_d^{\alpha_d}$. In addition, let
$\mathcal{P}_{-1}(\mathcal{K}):=\{0\}$, $\mathcal{M}_{-1}(\mathcal{K}):=\emptyset$.

Throughout this paper C denotes a positive constant independent of $h$, but may have different values at different places.

\subsection{Model problem and variational formulation}

 In this subsection, we shall introduce the nonlinear Poisson-Boltzmann equation (PBE), the boundary condition and the corresponding weak formulation. Let $\Omega \subset \mathcal{R}^3$ be a bounded convex polyhedron with boundary $\partial \Omega$ of $C^2$. Denote by $\Omega_m \subset \Omega$ and $\Omega_s = \Omega \setminus \Omega_m$ the molecule regin and the solvent region, respectively. We use $\tilde{u}$ to be the dimensionless potential and consider the following nonlinear Poisson-Boltzmann equation
\begin{equation}\label{nlpbe}
	-\nabla\cdot(\varepsilon\nabla\tilde{u})+\bar{\kappa}^2 \sinh(\tilde{u})=\sum_{i=1}^{N_{m}}\tilde{q_{i}}\delta_{i},\quad\text{in}\;\Omega,
\end{equation}
with Dirichlet boundary condition
\begin{equation}\label{nlb}
	\tilde{u}=0,\quad\text{on}\;\partial\Omega,
\end{equation}
and jump conditions on the interface $\Gamma := \partial \Omega_m = \Omega_s \cap \Omega_m$
\begin{equation}
    [\tilde{u}]=0,~[\varepsilon \frac{\partial \tilde{u}}{\partial n_{\Gamma}}]=0,
\end{equation}
where $[v] = \mathop{lim}\limits_{t \longrightarrow 0} v(x+tn_{\Gamma})-v(x-tn_{\Gamma})$, with $n_{\Gamma}$ being the unit outward normal direction of interface $\Gamma$. Assume $\Gamma$ is sufficiently smooth, say, of class $C^2$. The dielectric $\varepsilon$ and the modified Debye-H$\ddot{u}$ckel parameter $\bar{\kappa}$ are defined as follows:
	\begin{align}\label{parameters}
	\varepsilon=\begin{cases}
	\varepsilon_{m},~in~\Omega_{m},\\
	\varepsilon_{s},~in~\Omega_{s},\\
	\end{cases}~&
	\bar{\kappa}=\begin{cases}
	0,~in~\Omega_{m},\\
	\sqrt{\varepsilon_{s}}\kappa >0,~in~\Omega_{s},\\
	\end{cases}
	\end{align}
where $\kappa$ denotes the Debye-H$\ddot{u}$ckel parameter. The function $\delta_i := \delta(x-x_i)$ is a Dirac distribution at point $x_i$ and $\tilde{q}_i := \frac{4\pi e_c^2}{k_B T}z_i$, where $k_B > 0$ is the Boltzmann constant, $T$ is the temperature, $e_c$ is the unit of charge, and $z_i$ is the amount of charge.

To deal with the singularities of the $\delta$ distributions, we decompose $\tilde{u}$ into the following form (cf. \cite{Chen2007The}),
\begin{align}\label{G}
    \tilde{u}=\hat{u} + G,~~\text{with}~G = \sum_{i=1}^{N_m}G_i,
\end{align}
where $\hat{u}$ is an unknown function in $H^1(\Omega)$ and $G_i = \frac{\tilde{q}_i}{\varepsilon_m}\frac{1}{|x-x_i|}$ is the fundamental solution of
\[
    -\varepsilon_m \Delta G_i = \tilde{q}_i \delta_i,~\text{in}~\mathcal{R}^3.
\]
So the singularity of $\delta$ distribution are transferred to a known function $G$.

Substituting (\ref{G}) into (\ref{nlpbe})-(\ref{nlb}), we obtain the so-called regularized Poisson-Boltzmann equation (RPBE):
\begin{equation}\label{rpbe}
\begin{aligned}
    -\nabla\cdot(\varepsilon\nabla \hat{u})+\bar{\kappa}^2\sinh(\hat{u}+G) &= \nabla \cdot((\varepsilon-\varepsilon_{m})\nabla G),\quad\text{in}\;\Omega,\\
    \hat{u} &= -G,~~\text{on}~\partial \Omega.
\end{aligned}
\end{equation}


For the convenience of analysis, we consider the following homogeneous RPBE:
\begin{equation}\label{rrpbe}
\begin{aligned}
    -\nabla\cdot(\varepsilon\nabla u)+\bar{\kappa}^2\sinh(u+G) &= \nabla \cdot((\varepsilon-\varepsilon_{m})\nabla G),\quad\text{in}\;\Omega,\\
    u &= 0,~~\text{on}~\partial \Omega.
\end{aligned}
\end{equation}
The variational formulation of (\ref{rrpbe}) reads: find $u\in H^1_0(\Omega)$
such that
\begin{equation}\label{npbew}
    a(u,v)+(B(u),v)=(f_G,v),~\forall v \in H^1_0(\Omega),
\end{equation}
where
\begin{equation}\label{npbed}
a(u,v)=(\varepsilon \nabla u, \nabla v),~
(B(u),v)=(\bar{\kappa}^2\sinh(u+G),v),~
(f_{G},v)=(\nabla \cdot ((\varepsilon-\varepsilon_{m}) \nabla G),v).
\end{equation}
It is easy to show $f_G \in L^{\infty}(\Omega)$ from the definitions of $G$ and $\varepsilon$. The bilinear form $a(\cdot,\cdot)$ satisfies the coercivity and continuity conditions as follows (cf. \cite{Chen2007The}): for $u,v \in H^1_0(\Omega)$, there exist constants $C_*,C^*>0$ such that
\begin{align}\label{acc}
    C_*||u||_1^2 \leq a(u,u),~\text{and}~a(u,v) \leq C^*||u||_1||v||_1.
\end{align}

In \cite{Chen2007The}, Chen presented the existence and uniqueness of the solution of (\ref{npbew}) and also showed the
 a priori $L^{\infty}$-estimates as follows:
\begin{lemma}\label{uboundary} (cf. \cite{Chen2007The})
   Let $u\in H_0^1(\Omega)$ be the solution of (\ref{npbew}). Then u is in $L^{\infty}(\Omega)$.
\end{lemma}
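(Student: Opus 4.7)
\vspace{1ex}\noindent\textbf{Proof proposal.}
The plan is to derive the $L^{\infty}$-bound by a Stampacchia-type truncation argument that exploits the coercivity (\ref{acc}) of $a(\cdot,\cdot)$ and the monotonicity of $\sinh$. The decisive preliminary observation is that, although $G$ blows up at the atomic centers $x_{i}$, these points lie strictly inside $\Omega_{m}$; hence $G$ is smooth on $\overline{\Omega_{s}}$ and the constant $K_{0}:=\|G\|_{L^{\infty}(\Omega_{s})}$ is finite. I will also use that $f_{G}\in L^{\infty}(\Omega)$, which is already noted after (\ref{npbed}).

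For the upper bound, I would fix $K\geq K_{0}$, set $\phi:=(u-K)^{+}\in H_{0}^{1}(\Omega)$, and test (\ref{npbew}) with $\phi$. The diffusive term gives $a(u,\phi)=(\varepsilon\nabla\phi,\nabla\phi)\geq C_{*}\|\phi\|_{1}^{2}$. For the nonlinear term I split the integral over $\Omega_{m}$ and $\Omega_{s}$: on $\Omega_{m}$ the factor $\bar{\kappa}^{2}$ vanishes; on $\{u>K\}\cap\Omega_{s}$ we have $u+G>K-K_{0}\geq 0$, so $\sinh(u+G)\geq 0$ and the integrand is nonnegative. Therefore $(B(u),\phi)\geq 0$, and (\ref{npbew}) yields
\begin{equation*}
C_{*}\|\phi\|_{1}^{2}\leq(f_{G},\phi)\leq\|f_{G}\|_{0,\infty}\,\|\phi\|_{L^{1}(A(K))},
\end{equation*}
where $A(K):=\{x\in\Omega:u(x)>K\}$.

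H\"older's inequality and the Sobolev embedding $H_{0}^{1}(\Omega)\hookrightarrow L^{6}(\Omega)$ then give $\|\phi\|_{L^{1}(A(K))}\leq|A(K)|^{5/6}\|\phi\|_{L^{6}(\Omega)}\leq C|A(K)|^{5/6}\|\phi\|_{1}$, whence $\|\phi\|_{1}\leq C|A(K)|^{5/6}$. For any $h>K\geq K_{0}$, bounding $(h-K)|A(h)|\leq\|\phi\|_{L^{1}(A(h))}\leq|A(h)|^{5/6}\|\phi\|_{L^{6}(\Omega)}$ leads to
\begin{equation*}
|A(h)|\leq\frac{C|A(K)|^{5}}{(h-K)^{6}}.
\end{equation*}
Because the exponent $5>1$, the classical Stampacchia lemma produces an explicit $K^{*}<\infty$ with $|A(K^{*})|=0$, i.e.\ $u\leq K^{*}$ a.e.\ in $\Omega$. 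A symmetric test with $(u+K)^{-}$ (valid for $K\geq K_{0}$, since then $u+G<-K+K_{0}\leq 0$ on $\{u<-K\}\cap\Omega_{s}$, so $\sinh(u+G)\leq 0$ there) delivers a matching lower bound, and together these give $u\in L^{\infty}(\Omega)$.

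The main obstacle I anticipate is the sign bookkeeping of the nonlinear term: one must use simultaneously that the singularities of $G$ reside inside $\Omega_{m}$, where $\bar{\kappa}=0$ kills the bad contribution, and that $\|G\|_{L^{\infty}(\Omega_{s})}$ is finite, so that a single threshold $K_{0}$ absorbs the constant shift in the argument of $\sinh$ on the solvent side. Once this sign analysis is in place, the remainder is the standard Stampacchia iteration driven by the three-dimensional Sobolev embedding.
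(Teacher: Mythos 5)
Your proof is correct within the paper's stated framework, but note that the paper offers no argument of its own for this lemma: it simply cites \cite{Chen2007The}, where the $L^{\infty}$ bound is obtained by a one-shot cut-off argument rather than a Stampacchia iteration. There one chooses the truncation level $\alpha$ a priori so large (essentially an $\mathrm{arcsinh}$ of the data, shifted by $\|G\|_{L^{\infty}(\Omega_{s})}$) that $\bar{\kappa}^{2}\sinh(\alpha+G)$ dominates the source on $\Omega_{s}$, while on $\Omega_{m}$ both the nonlinearity and the source vanish (since $\bar{\kappa}=0$ and $\varepsilon-\varepsilon_{m}=0$ there); testing with $(u-\alpha)^{+}$ then makes the combined nonlinear-plus-source contribution nonnegative on the overshoot set, so coercivity alone forces $(u-\alpha)^{+}=0$, and symmetrically from below --- no level-set iteration, and an explicit a priori value for $\|u\|_{0,\infty}$ in terms of the data. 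Your variant instead uses only the sign information $\sinh(u+G)\ge 0$ above the threshold $K_{0}=\|G\|_{L^{\infty}(\Omega_{s})}$ and pushes the bounded right-hand side through H\"older, the embedding $H^{1}_{0}(\Omega)\hookrightarrow L^{6}(\Omega)$, and Stampacchia's lemma; the sign bookkeeping (singularities of $G$ confined to $\Omega_{m}$, where $\bar{\kappa}$ kills the nonlinear term; $G$ bounded on $\overline{\Omega}_{s}$), the fact that $(u\mp K)^{\pm}\in H^{1}_{0}(\Omega)$ for $K\ge K_{0}\ge 0$, and the iteration inequality $|A(h)|\le C|A(K)|^{5}/(h-K)^{6}$ with exponent $5>1$ are all in order. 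What your route buys is robustness: it never requires the nonlinearity to absorb the source, so it survives $\bar{\kappa}\equiv 0$, weaker integrability of $f_{G}$ (any $L^{p}$ with $p>3/2$), or the gradient-form right-hand side actually used in \cite{Chen2007The}; what it gives up is the explicit bound, which the cited proof provides and which is convenient for the subsequent nonlinear estimates. Both arguments rest on the paper's standing claim that $f_{G}\in L^{\infty}(\Omega)$ (in \cite{Chen2007The}, on the boundedness of $\nabla G$ on $\Omega_{s}$) and on the charge centers $x_{i}$ lying strictly inside $\Omega_{m}$ so that $K_{0}<\infty$; you state both explicitly, so I see no gap.
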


For convenience of analysis, since then we shall use $\Omega_1, \Omega_2$ to denote $\Omega_m$ and $\Omega_s$, respectively. Define the following auxiliary space
\[
    X=H^1(\Omega) \cap H^2(\Omega_1) \cap H^2(\Omega_2),
\]
equipped with the norm
\begin{align*}
 ||v||_X &=||v||_{1,\Omega}+||v||_{2,\Omega_1}+||v||_{2,\Omega_2},~\forall v\in X.
\end{align*}

{\color{black}By using Lemma \ref{uboundary}, we have the regularity for the solution of the interface problem (\ref{rrpbe}).}
\begin{lemma}\label{ux}
The problem (\ref{rrpbe}) has a unique solution $u \in H^1_0(\Omega)$ and $u$ satisfies
\[
     ||u||_{X} \leq C.
\]
\end{lemma}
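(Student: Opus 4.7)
The existence and uniqueness of $u \in H^1_0(\Omega)$ solving (\ref{npbew}) follow from the variational analysis of this monotone semilinear problem already carried out in \cite{Chen2007The}, relying on the coercivity (\ref{acc}) of $a(\cdot,\cdot)$ together with the strict monotonicity of $s \mapsto \sinh(s+G)$. By Lemma \ref{uboundary} we have $u \in L^\infty(\Omega)$, and taking $v=u$ in (\ref{npbew}), using this $L^\infty$-bound to control $|\sinh(u+G)|$ on $\Omega_s$ (where $G$ is smooth), gives the global $H^1$-estimate $\|u\|_{1,\Omega} \leq C$.

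To obtain the piecewise $H^2$-bound, I would recast (\ref{rrpbe}) as a linear transmission problem with a bounded right-hand side. Because the point charges $x_i$ lie strictly inside $\Omega_m$, the function $G$ is $C^\infty$ and harmonic on a neighbourhood of $\overline{\Omega_s}$, and because $\bar\kappa=0$ and $\varepsilon-\varepsilon_m=0$ on $\Omega_m$, testing (\ref{rrpbe}) against smooth functions supported in each subdomain yields the pointwise identities
\begin{align*}
    -\varepsilon_m \Delta u &= 0 \quad \text{in } \Omega_1, \\
    -\varepsilon_s \Delta u &= -\varepsilon_s \kappa^2 \sinh(u+G) =: F(u) \quad \text{in } \Omega_2,
\end{align*}
while allowing the test function to cross $\Gamma$ produces the interface conditions $[u]_\Gamma = 0$ and $[\varepsilon \partial_{n_\Gamma} u]_\Gamma = -(\varepsilon_s-\varepsilon_m)\partial_{n_\Gamma} G$, together with $u=0$ on $\partial \Omega$. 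Lemma \ref{uboundary} and the smoothness of $G$ near $\Gamma$ give $F(u) \in L^\infty(\Omega_2) \subset L^2(\Omega_2)$ and the flux-jump datum in $H^{3/2}(\Gamma)$.

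At this point I would invoke the classical Kellogg-type regularity theorem for elliptic transmission problems with piecewise constant coefficients and a $C^2$-interface, of the kind exploited in \cite{Chen1998Finite}: for data in $L^2$ on each subdomain and a flux jump in $H^{1/2}(\Gamma)$, the solution lies in $H^2(\Omega_1)\cap H^2(\Omega_2)$, with the quantitative bound
\begin{equation*}
    \|u\|_{2,\Omega_1} + \|u\|_{2,\Omega_2} \leq C\bigl(\|F(u)\|_{0,\Omega_2} + \|(\varepsilon_s-\varepsilon_m)\partial_{n_\Gamma} G\|_{1/2,\Gamma} + \|u\|_{1,\Omega}\bigr).
\end{equation*}
Substituting the bounds established in the previous two paragraphs yields $\|u\|_X \leq C$.

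The principal obstacle is not the nonlinearity, which is harmless once $u \in L^\infty$ is in hand (it then enters only as a bounded $L^2$ forcing supported on $\Omega_s$), but rather the careful identification of the interface flux-jump condition generated by the distributional right-hand side $\nabla\cdot((\varepsilon-\varepsilon_m)\nabla G)$, and the invocation of a sufficiently sharp transmission-regularity theorem that simultaneously accommodates a nonzero $H^{1/2}$ flux jump on $\Gamma$ and the $C^2$-regularity of the interface.
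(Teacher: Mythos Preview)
Your approach is correct and coincides with the paper's: both use Lemma~\ref{uboundary} to freeze the nonlinearity as a bounded $L^2$ forcing supported in $\Omega_s$, then invoke the transmission regularity theory for piecewise-constant-coefficient interface problems from \cite{Chen1998Finite}. The paper is terser---it simply cites \cite{Chen1998Finite} for the bound $\|u\|_X \le C\|f_G - B(u)\|_{0,\Omega}$ and then controls $\|B(u)\|_{0,\Omega}$---whereas you make the subdomain equations and the flux-jump $[\varepsilon\,\partial_{n_\Gamma} u]=-(\varepsilon_s-\varepsilon_m)\partial_{n_\Gamma}G$ explicit; this is an elaboration of the same argument rather than a different route.
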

\begin{proof}
    From \cite{Chen1998Finite}, we obtain that
    \begin{align}\label{ux1}
    ||u||_X \leq C||f_G - B(u)||_{0,\Omega} \leq C( ||f_G||_{0,\Omega}+||B(u)||_{0,\Omega}).
    \end{align}
    Then by the definition of $B(u)$ in (\ref{npbed}), Lemma \ref{uboundary} and $G \in C^{\infty}(\Omega_2)$ we get
    \begin{align}\label{ux2}
       ||B(u)||_{0,\Omega} \leq C||sinh(u+G)||_{0,\Omega_2} \leq C.
    \end{align}
    Combining the facts that $f_G \in L^{\infty}(\Omega)$ and (\ref{ux1}) - (\ref{ux2}), we can get the desired result. $\hfill\Box$
\end{proof}
\subsection{Virtual elements and Discretization}

In the present subsection, we shall introduce the virtual element space and discretization of (\ref{npbew}). First, we give the detailed description of the decomposition $\mathcal{T}_h$.

Let $\mathcal{T}_h$ be a decomposition of $\bar{\Omega}$ made of non-overlapping and not self-intersecting polyhedral elements such that the diameter of any $E \in \mathcal{T}_h$ is bounded by $h:=\text{max}\{h_E:=\text{diam}(E) | E \in \mathcal{T}_h \}$. The faces and edges of element $E$ are denoted by $f$ and $e$, respectively. Following \cite{B2013Equivalent}, we make the assumption for the mesh $\mathcal{T}_h$: there exists a constant $\gamma > 0$ such that
    \begin{itemize}
      \item for each face $f$ and each edge $e$ on each element $E \in \mathcal{T}_h$ satisify, $h_e \geq \gamma h_f \geq \gamma^2 h_E,$
    where $h_f,~h_e$ are the diameter of the face $f$ and the diameter of the edge $e$, respectively.
      \item every element $E$ of $\mathcal{T}_h$ is star-shaped with respect to a ball of radius $ \geq \gamma h_E$.
      \item every face $f$ is star-shaped with respect to a ball of radius $\geq \gamma h_f$.
    \end{itemize}

Next, we introduce the definition of "interface element" (see Fig. \ref{interfaceele} as an example) to classify $\mathcal{T}_h$.
First, we assume the interface $\Gamma$ is represented by the zero-level set of a function $\varphi(x)$, i.e.,(cf. \cite{Chen2017An})
\[
      \Gamma := \{ x\in \Omega |~\varphi(x)=0\}.
\]
Then the subregions on $\Omega$ can be denoted by
\[
      \Omega_1:= \{ x\in \Omega |~\varphi(x)<0\}~and~\Omega_2:= \{ x\in \Omega |~\varphi(x)>0\}.
\]
Therefore, when an element $E$ satisfies $\bar{E}\cap \Gamma \neq \emptyset$, it is called an "interface element". More detailed description is as follows
\begin{itemize}
  \item There exists at least two point $x_1,~x_2 \in \bar{E}$, $E \in \mathcal{T}_h$ satisfy $\varphi(x_1)\varphi(x_2) \leq 0$.
\end{itemize}
Then, the decomposition $\mathcal{T}_h$ can be divided into
\begin{align}\label{ge}
\mathcal{T}_h^*:=\{E \in \mathcal{T}_h |~E~\text{is the interface element}\}
\end{align}
and $\mathcal{T}_h\setminus\mathcal{T}_h^*$.


\begin{figure}[H]
 \centering
 {
  \begin{minipage}{10cm}
   \centering
   \includegraphics[scale=0.8]{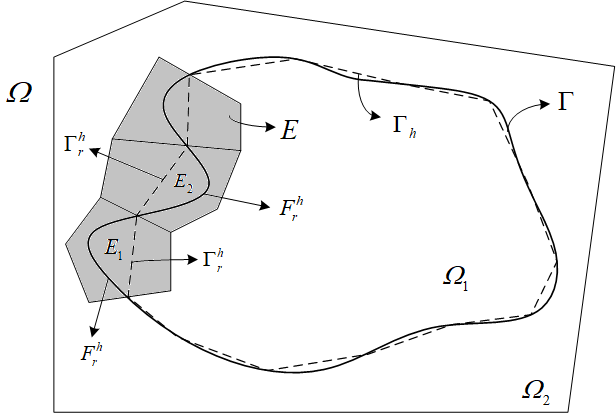}
  \end{minipage}
 }

    \caption{2-D schematic illustration of the interface elements. The grey region represents some of the interface elements. The interface $\Gamma$ is the black solid line that divides region $\Omega$ into $\Omega_1$ and $\Omega_2$, which is approximated by dotted line $\Gamma_h$.}\label{interfaceele}
\end{figure}
Since the interface $\Gamma$ is of class $C^2$, which is approximated by a union of polygons of $\mathcal{T}_h^*$, we assume that the discrete interface $\Gamma_h$ approximates the known interface $\Gamma$ to the second order, i.e.,(cf. Assumption 2, in \cite{Hiptmair2012Convergence})
\begin{align}\label{curve}
   dist(\Gamma,\Gamma_h)\leq Ch^2,
\end{align}
where the polygonal boundary $\Gamma_h$ with all vertices on $\Gamma$. Then, we make a further quantification of the region between $\Gamma$ and $\Gamma_h$. Let
\[
    \Gamma_h = \bigcup_{r=1}^{m_h}F_r^h ~\text{and}~ \Gamma = \bigcup_{r=1}^{m_h}\Gamma_r^h,
\]
where $F_r^h$ is the polygon in $\mathcal{T}_h^*$ and $\Gamma_r^h$ is the part of $\Gamma$ corresponding $F_r^h$ (see Fig.1). 
Next, we introduce the following properties of the interface element.
\begin{lemma}\label{intermeas}
For any interface element $E \in \mathcal{T}_{E}^*$, let $E_{i},~i=1,2$ denote the part between $\Gamma_r^h$ and $F_r^h$ that belongs to $\Omega_i$. Then, we can derive that
\begin{align}\label{fememeas}
either ~|E_{1}| \leq Ch_{E}^4,~or~|E_{2}| \leq Ch_{E}^4,
\end{align}
where $|E_{i}|$ is the Lebesgue measure of $E_{i}$.
\end{lemma}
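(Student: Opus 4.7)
The plan is to bound the volume of the thin slab lying between $\Gamma_r^h$ and $F_r^h$ by $Ch_E^4$, from which the claim follows at once: since $E_1$ and $E_2$ partition this slab, each is bounded by the total, and so both $|E_1|\leq Ch_E^4$ and $|E_2|\leq Ch_E^4$, which is strictly stronger than the stated ``either/or''. The estimate factors into a base-area bound on $F_r^h$ and a normal-height bound on $\Gamma_r^h$ above the plane of $F_r^h$.

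The area bound is routine: by the mesh regularity assumption from \cite{B2013Equivalent}, $F_r^h$ has diameter $h_f\leq h_E/\gamma$ and is star-shaped with respect to a ball of radius $\geq \gamma h_f$, so $|F_r^h|\leq Ch_E^2$. For the height, the two inputs are that every vertex of $F_r^h$ lies on $\Gamma$ (by construction of $\Gamma_h$) and that $\Gamma$ is of class $C^2$ with principal curvatures bounded independently of $h$. For $h$ small enough, these facts permit me to represent $\Gamma_r^h$ as a normal graph $x\mapsto x+\phi(x)\nu$ over $F_r^h$, where $\nu$ is the unit normal to the plane of $F_r^h$; the graph function $\phi$ is $C^2$, vanishes at the vertices of $F_r^h$, and satisfies $\|\nabla^2\phi\|_{0,\infty,F_r^h}\leq C$ with $C$ governed by the curvature of $\Gamma$. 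A Bramble--Hilbert style interpolation argument on the star-shaped polygon $F_r^h$ (or a triangulation of it anchored at the vertices) then yields
\[
    \max_{F_r^h}|\phi|\;\leq\;Ch_f^{2}\,\|\nabla^2\phi\|_{0,\infty,F_r^h}\;\leq\;Ch_E^{2}.
\]
Combining the two,
\[
    |E_1|+|E_2|\;=\;\int_{F_r^h}|\phi(x)|\,d\sigma(x)\;\leq\;|F_r^h|\cdot\max_{F_r^h}|\phi|\;\leq\;Ch_E^4,
\]
which gives the assertion.

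The principal obstacle I anticipate is the geometric setup rather than the analysis: one has to verify that the orthogonal projection from $\Gamma_r^h$ onto the plane of $F_r^h$ is a homeomorphism onto $F_r^h$ (requiring $h_E$ to be small relative to the reciprocal curvature of $\Gamma$), and to justify a polynomial-interpolation inequality on a general polygonal, not merely triangular, domain, which is precisely where the star-shaped-with-ball hypothesis on $F_r^h$ is essential. An alternative route would be to invoke the global approximation estimate (\ref{curve}) to get $\max|\phi|\leq Ch^2$, but only the local argument above delivers the sharper $h_E^2$ factor and thus the desired $h_E^4$ volume scaling.
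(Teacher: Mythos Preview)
Your argument is correct and is precisely the three-dimensional analogue of the standard geometric estimate the paper invokes by citing \cite{Feistauer1987On}; the paper itself supplies no details beyond that one-line reference. As you observe, the base-times-height bound on the slab in fact yields both $|E_1|\leq Ch_E^4$ and $|E_2|\leq Ch_E^4$ simultaneously, which is stronger than the stated ``either/or'' and is all that the subsequent application in Lemma~\ref{globalpi} requires.
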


\begin{proof} It is easy to get (\ref{fememeas}) by following the deduction in \cite{Feistauer1987On} for two dimensional domain $\Omega$.
 $\hfill\Box$
\end{proof}\\

%

Next, since the definition of virtual element space is based on the availability of certain local projection operators, we will introduce element-wise defined projectors. First for any face $f$, we need define a local space (cf. \cite{L2017High-order})
\[
    \tilde{V}_h^k(f) := \{v \in H^1(f) | v_{|_{\partial f}} \in C^0(\partial f),v_{|_e} \in \mathcal{P}_k(e),\forall e \in \partial f,\Delta v \in \mathcal{P}_{k}(f) \},
\]
and for $v_h \in \tilde{V}_h^k(f)$, choose the corresponding degrees of freedom as follows:
\begin{itemize}
  \item the values of $v_h$ at the vertices, for $k \geq 2$ the moments,
\end{itemize}
\begin{itemize}
  \item $|e|^{-1}(m,v_h)_{0,e}~\forall m \in \mathcal{M}_{k-2}(e)$,
  \item $|f|^{-1}(m,v_h)_{0,f}~\forall m \in \mathcal{M}_{k-2}(f)$.
\end{itemize}
Then we can define a projection operator
\[
    \Pi^{\nabla}_{k,f}:\tilde{V}_h(f) \rightarrow \mathcal{P}_k(f),
\]
associating any $v_h \in \tilde{V}_h(f)$ with the element in $\mathcal{P}_k(f)$ such that
\begin{align}\label{P1.1}
    (\nabla \Pi^{\nabla}_{k,f} v_h - \nabla v_h, \nabla p)_{0,f} = 0,~\forall p \in \mathcal{P}_k(f),
\end{align}
and
\begin{align}\label{P1.2}
    \int_{\partial f}(v_h - \Pi^{\nabla}_{k,f} v_h)\mathrm{d}s = 0~for~k=1,
\end{align}
or
\begin{align}\label{P1.3}
    \int_{f}(v_h - \Pi^{\nabla}_{k,f} v_h)\mathrm{d}\boldsymbol{x} = 0~for~k \geq 2.
\end{align}

Now, the local virtual element space on face $f$ is defined by
\[
    V_h(f):=\{v \in \tilde{V}_h^k(f)|(v-\Pi_{k,f}^{\nabla}v,p)_{0,f}=0,~\forall p \in \mathcal{M}_k(f)\setminus \mathcal{M}_{k-2}(f)\}.
\]
Similarly, we can give the definition of the local virtual element space on polyhedron $E$:
\[
    V_h(E):=\{ v \in \tilde{V}_h(E)| (v-\Pi_{k,E}^{\nabla}v,p)_{0,E}=0,~\forall p \in \mathcal{M}_k(E)\setminus \mathcal{M}_{k-2}(E)\},
\]
where
\[
\tilde{V}_h(E):=\{ v \in H^1(E) | v_{|_{\partial E}}\in C^0(\partial E), v_{|_{f}} \in V_h(f),\forall f \in \partial E, \Delta v \in \mathcal{P}_k(E) \},
\]
and $\Pi_{k,E}^{\nabla}: \tilde{V}_h(E) \rightarrow \mathcal{P}_k(E)$ is the projection operator defined on each element $E$, which satisfies conditions similar to (\ref{P1.1})-(\ref{P1.3}):
\begin{align}\label{P1.4}
    (\nabla \Pi^{\nabla}_{k,E} v_h - \nabla v_h, \nabla p)_{0,E} = 0,~\forall p \in \mathcal{P}_k(E),
\end{align}
\begin{align}\label{P1.5}
    \int_{\partial E}(v_h - \Pi^{\nabla}_{k,E} v_h)\mathrm{d}S = 0~for~k=1,
\end{align}
or
\begin{align}\label{P1.6}
    \int_{E}(v_h - \Pi^{\nabla}_{k,E} v_h)\mathrm{d}\boldsymbol{x} = 0~for~k \geq 2.
\end{align}
For simplicity, we use $\Pi_k^{\nabla}$ instead of $\Pi_{k,E}^{\nabla}$ in the following sections of the article.

For any $v_h \in \tilde{V}_h(E)$, choose the following degrees of freedom
\begin{itemize}
  \item the values of $v_h$ at the vertices of $E$,
\end{itemize}
and for $k \geq 2$ the moments
\begin{itemize}
  \item $|e|^{-1}(m,v_h)_{0,e}$, $\forall m \in \mathcal{M}_{k-2}(e)$ on each edge $e$ of $E$,
  \item $|f|^{-1}(m,v_h)_{0,f}$, $\forall m \in \mathcal{M}_{k-2}(f)$ on each face $f$ of $E$,
  \item $|E|^{-1}(m,v_h)_{0,E}$, $\forall m \in \mathcal{M}_{k-2}(E)$.
\end{itemize}

Finally, the virtual element space is defined by
\[
    V_h:=\{v\in H^1_0(\Omega)| v_{|_{E}}\in V_h(E),~\forall E \in \mathcal{T}_h\}.
\]

Before introducing the discrete form of (\ref{npbew}), the $L^2$ projection needs to be introduced. Let $\Pi_k^0$ be the $L^2$ projection onto $\mathcal{P}_k(E)$, for every $v_h \in V_h(E)$, defined by the orthogonality conditions
\begin{align}\label{Pik0}
    (v_h - \Pi_k^0 v_h,p)_{0,E}=0,~\forall p\in \mathcal{P}_k(E).
\end{align}
Then, we will give the discrete forms of (\ref{npbew}). First, we use the $a^E(\cdot,\cdot)$ to represent the restriction on $E$ in the corresponding bilinear form defined in (\ref{npbed}). Let $S^E(u_h,v_h)$ be a symmetric positive definite bilinear form on $V_h(E)\times V_h(E)$ and satisfy
\[
    C_*a^E(v_h,v_h) \leq S^E(v_h,v_h) \leq C^*a^E(v_h,v_h),~\forall v \in V_h(E),~\text{with}~\Pi_k^0v_h=0,
\]
for some positive constants $C_*,C^*$ independent of $E$ and $h_E$. Then the virtual element method of order $k \geq 1$ for (\ref{npbew}) reads: find $u_h \in {V}_h$ such that
\begin{align}\label{nlpbed}
    a_h(u_h,v_h) + (B_h(u_h),v_h) = (f_h,v_h),~\forall v_h \in {V}_h,
\end{align}
where the bilinear forms can be split as: for all $u_h,~v_h \in {V}_h$,
\begin{align}
    a_h(u_h,v_h) = \sum_E a^E_h(u_h,v_h),~
    (B_h(u_h),v_h) = \sum_E (B_h(u_h),v_h)_E,~
    (f_h,v_h) = \sum_E (f_h,v_h)_E. \notag
\end{align}
The definitions of the local forms on every element $E \in \mathcal{T}_h$ are as follows
\begin{align}
    a_h^E(u_h,v_h) &= (\varepsilon \Pi_{k-1}^0 \nabla u_h,\Pi_{k-1}^0 \nabla v_h)_{0,E} + S^E((I-\Pi_{k}^\nabla)u_h,(I-\Pi_{k}^\nabla)v_h),\notag\\
    (B_h(u_h),v_h)_{E} &= (B(\Pi_k^0u_h),\Pi_k^0v_h)_{0,E},\\
    (f_h,v_h)_E &= (f_G,\Pi_k^0v_h)_{0,E}.\notag
\end{align}

\subsection{Preliminary results}
In this subsection, we introduce some lemmas before presenting the error estimate for the PBE. We start by the following standard approximation result.
\begin{lemma}\label{pinature}(cf. \cite{Cangiani2017Conforming})
      There exists a positive constant $C$ such that, for any $E \in \mathcal{T}_h$ and any $v \in H^s(E)$, $s=0,1$, it holds:
      \begin{align*}
      ||v - \Pi_k^0 v||_{0,E} + h_E|v - \Pi_k^0 v|_{1,E} &\leq Ch_E^{s}|v|_{s,E},~1\leq s\leq k+1.
      \end{align*}
\end{lemma}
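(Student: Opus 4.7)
The plan is to establish this as a standard consequence of three ingredients: a Bramble--Hilbert type polynomial approximation on star-shaped domains, the $L^2$-orthogonality that defines $\Pi_k^0$, and an inverse inequality for polynomials on $E$. The mesh assumption that every $E\in\mathcal{T}_h$ is star-shaped with respect to a ball of radius at least $\gamma h_E$ is exactly what makes all the relevant constants depend only on $\gamma$ and $k$, and not on the particular geometry of the polyhedron.

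First I would invoke the Dupont--Scott polynomial approximation result on the star-shaped element $E$: there exists $p\in\mathcal{P}_k(E)$ (the averaged Taylor polynomial, say) such that
\[
\|v-p\|_{0,E}+h_E\,|v-p|_{1,E}\;\le\;C\,h_E^{s}\,|v|_{s,E},\qquad 1\le s\le k+1,
\]
with $C=C(\gamma,k)$. Next, using the defining orthogonality relation \eqref{Pik0} and picking $p$ as the competitor in the $L^2$ best-approximation inequality, I get the $L^2$-bound
\[
\|v-\Pi_k^0 v\|_{0,E}\;\le\;\|v-p\|_{0,E}\;\le\;C\,h_E^{s}\,|v|_{s,E}.
\]

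For the $H^1$-seminorm part I would use a triangle-inequality split
\[
|v-\Pi_k^0 v|_{1,E}\;\le\;|v-p|_{1,E}+|p-\Pi_k^0 v|_{1,E},
\]
treat the first term directly via the Dupont--Scott bound (contributing $C h_E^{s-1}|v|_{s,E}$), and control the second by noting that $p-\Pi_k^0 v\in\mathcal{P}_k(E)$ and applying the polynomial inverse inequality $|q|_{1,E}\le C h_E^{-1}\|q\|_{0,E}$. Combining this with $\|p-\Pi_k^0 v\|_{0,E}\le \|p-v\|_{0,E}+\|v-\Pi_k^0 v\|_{0,E}\le C h_E^{s}|v|_{s,E}$ yields $|v-\Pi_k^0 v|_{1,E}\le C h_E^{s-1}|v|_{s,E}$, which multiplied by $h_E$ and added to the $L^2$-bound gives the stated estimate.

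The main technical obstacle is ensuring that both the Dupont--Scott approximation constant and the polynomial inverse constant are uniform across the family $\{\mathcal{T}_h\}$. This is precisely where the star-shapedness hypothesis on $E$ (and the edge/face size control $h_e\ge \gamma h_f\ge \gamma^2 h_E$) is used: the former underwrites the averaged Taylor expansion on a ball-centered chunk of $E$, and together with the shape-regularity of faces and edges it keeps the inverse inequality constant bounded in terms of $\gamma$ alone. Everything else is a triangle-inequality bookkeeping step; there are no genuine difficulties beyond the careful tracking of these geometric constants, which is why the lemma can be safely quoted from \cite{Cangiani2017Conforming}.
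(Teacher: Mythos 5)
Your proof is correct and is exactly the standard argument (Dupont--Scott approximation on star-shaped elements, best-approximation in $L^2$ for $\Pi_k^0$, and a polynomial inverse estimate whose constant depends only on $\gamma$ and $k$) that underlies the result; the paper itself does not prove this lemma but simply quotes it from the cited reference, so there is nothing to contrast beyond noting that your argument supplies the details the citation leaves out. The only cosmetic remark is that the constraint on the faces and edges is not actually needed for the inverse inequality, which follows from the element's star-shapedness alone, and that the paper's ``$s=0,1$'' in the hypothesis is evidently a typo for the range $1\le s\le k+1$ that you correctly work with.
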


From \cite{L2013Basic}, the form $a^E_h(\cdot,\cdot)$ satisfies the following properties:
\begin{lemma}
Stability (cf. \cite{L2013Basic}): There exist positive constants $\alpha_*,\alpha^*$, independent of h and the element $E$ such that
      \begin{align}\label{as}
          \alpha_*a^E(v_h,v_h) \leq a_h^E(v_h,v_h) \leq \alpha^*a^E(v_h,v_h),~\forall v_h \in V_h(E).
      \end{align}
\end{lemma}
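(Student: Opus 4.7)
The plan is to carry out the standard VEM local stability argument from \cite{L2013Basic}, decomposing $v_h$ through the elliptic projector $\Pi_k^\nabla$ and controlling the two pieces of $a_h^E$ separately against $a^E$. Three ingredients drive the proof: (i) polynomial consistency of $\Pi_{k-1}^0$ on gradients, i.e.\ $\Pi_{k-1}^0 \nabla p = \nabla p$ for any $p \in \mathcal{P}_k(E)$, since $\nabla p$ is already a polynomial of degree $k-1$; (ii) the $a^E$-orthogonality of the splitting $v_h = \Pi_k^\nabla v_h + (I - \Pi_k^\nabla) v_h$, which on elements where $\varepsilon$ is constant follows from (\ref{P1.4}); (iii) the two-sided bound assumed on $S^E$.

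For the upper bound I would estimate the consistency piece using the non-expansiveness of $\Pi_{k-1}^0$ in $L^2$ together with the pointwise bound $\varepsilon \le \varepsilon_{\max}$:
\begin{equation*}
(\varepsilon \,\Pi_{k-1}^0 \nabla v_h, \Pi_{k-1}^0 \nabla v_h)_{0,E} \le \varepsilon_{\max} \|\nabla v_h\|_{0,E}^2 \le \frac{\varepsilon_{\max}}{\varepsilon_{\min}}\, a^E(v_h,v_h),
\end{equation*}
and estimate the stabilization piece by invoking the upper bound on $S^E$ and then using the $a^E$-orthogonality of $\Pi_k^\nabla$ to absorb $(I-\Pi_k^\nabla)v_h$ by $v_h$ in the $a^E$-seminorm. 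This yields $a_h^E(v_h,v_h) \le \alpha^* a^E(v_h,v_h)$.

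For the lower bound I would split $v_h = \Pi_k^\nabla v_h + (I - \Pi_k^\nabla) v_h$ and apply the same orthogonality to write
\begin{equation*}
a^E(v_h, v_h) \le C\bigl[\, a^E(\Pi_k^\nabla v_h, \Pi_k^\nabla v_h) + a^E((I - \Pi_k^\nabla) v_h, (I - \Pi_k^\nabla) v_h) \,\bigr].
\end{equation*}
The first summand is controlled by the consistency piece of $a_h^E$ through the identity $\nabla \Pi_k^\nabla v_h = \Pi_{k-1}^0 \nabla \Pi_k^\nabla v_h$ together with Cauchy--Schwarz; the second is controlled by $S^E$ via the lower bound in the stability assumption. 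Summing the two bounds gives $\alpha_* a^E(v_h,v_h) \le a_h^E(v_h,v_h)$, with $\alpha_*,\alpha^*$ depending only on $C_*,C^*$ and $\varepsilon_{\max}/\varepsilon_{\min}$.

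The main subtlety I expect is the piecewise-constant jump of $\varepsilon$ on interface elements $E \in \mathcal{T}_h^*$, which can spoil the clean $a^E$-orthogonality of the projector-based splitting in step (ii). I would handle this by reducing to a constant-coefficient inner product on $E$ via the two-sided bound $\varepsilon_{\min} \le \varepsilon \le \varepsilon_{\max}$, paying only a bounded multiplicative factor that is uniform in $h$ and $E$; this preserves the mesh-independence of $\alpha_*,\alpha^*$ required by the lemma.
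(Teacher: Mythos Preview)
Your proposal is correct and follows the standard VEM stability argument from \cite{L2013Basic}. However, the paper does not give a proof of this lemma at all: it is stated with the citation ``(cf.\ \cite{L2013Basic})'' and immediately used, so there is nothing in the paper to compare your argument against at the level of technique. What you have written is essentially a reconstruction of the original proof in the cited reference, adapted to the piecewise-constant coefficient $\varepsilon$ via the sandwich $\varepsilon_{\min}\le\varepsilon\le\varepsilon_{\max}$, which is the right way to absorb the interface elements into the constants. One small point: the paper states the two-sided bound on $S^E$ for functions with $\Pi_k^0 v_h=0$ rather than $\Pi_k^\nabla v_h=0$, so when you invoke it on $(I-\Pi_k^\nabla)v_h$ you are implicitly using that these kernels coincide (or that the bound extends); this is standard in the enhanced VEM space $V_h(E)$ but is worth flagging if you want the argument to be self-contained.
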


Furthermore, from (\ref{acc}) and (\ref{as}), it is easy to get the property of $a_h(\cdot,\cdot)$
\begin{align}\label{ahcc}
     C_*||v_h||_1^2 \leq a_h(v_h,v_h),~a_h(u_h,v_h) \leq C^*||u_h||_1||v_h||_1,~\forall u_h,v_h \in V_h,
\end{align}
where the constants $C_*,C^*>0$ are independent of $h$.

Since the coefficient $ \varepsilon >0 $ in (\ref{parameters}) is a bounded function, then we can get the following lemma.\begin{lemma}
k-Consistency: For all $p \in \mathcal{P}_k(E)$ and $v_h \in V_h(E)$, it holds
      \begin{align}\label{akc}
          a_h^E(p,v_h) = a^E(p,v_h).
      \end{align}
\end{lemma}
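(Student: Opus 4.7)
The plan is to follow the standard virtual element $k$-consistency argument, exploiting the polynomial-preserving nature of the two projection operators that define $a_h^E$.

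First, I would deal with the stabilization piece. Since $\Pi_k^\nabla$ is a projector onto $\mathcal{P}_k(E)$ (it is characterized by (\ref{P1.4})--(\ref{P1.6})), it fixes every polynomial of degree $\leq k$. Hence for $p \in \mathcal{P}_k(E)$ we have $(I - \Pi_k^\nabla)p = 0$, and by bilinearity of $S^E(\cdot,\cdot)$ the stabilization contribution vanishes:
\[
S^E\bigl((I-\Pi_k^\nabla)p,\,(I-\Pi_k^\nabla)v_h\bigr)=0.
\]
Therefore $a_h^E(p,v_h)=(\varepsilon\,\Pi_{k-1}^0\nabla p,\,\Pi_{k-1}^0\nabla v_h)_{0,E}$.

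Second, I would observe that $\nabla p \in (\mathcal{P}_{k-1}(E))^3$ whenever $p\in \mathcal{P}_k(E)$. Since $\Pi_{k-1}^0$ is the $L^2$-projection onto $\mathcal{P}_{k-1}(E)$, it fixes componentwise polynomials of degree $\leq k-1$, giving $\Pi_{k-1}^0\nabla p=\nabla p$. Combining these two reductions yields
\[
a_h^E(p,v_h)=(\varepsilon\,\nabla p,\,\Pi_{k-1}^0\nabla v_h)_{0,E}.
\]

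Finally, I would invoke the defining orthogonality (\ref{Pik0}) for $\Pi_{k-1}^0$. Because $\varepsilon$ is constant on each element (taking the value $\varepsilon_m$ or $\varepsilon_s$, as indicated by the remark preceding the lemma that exploits boundedness/piecewise-constancy of $\varepsilon$), the product $\varepsilon \nabla p$ lies in $(\mathcal{P}_{k-1}(E))^3$, so $\nabla v_h - \Pi_{k-1}^0 \nabla v_h$ is $L^2(E)$-orthogonal to it. Thus
\[
(\varepsilon\,\nabla p,\,\Pi_{k-1}^0\nabla v_h)_{0,E}=(\varepsilon\,\nabla p,\,\nabla v_h)_{0,E}=a^E(p,v_h),
\]
which is the desired identity.

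The main subtlety, and the one place where care is needed, is the interface elements in $\mathcal{T}_h^*$, where $\varepsilon$ jumps across $\Gamma\cap E$ and $\varepsilon\nabla p$ is only piecewise in $\mathcal{P}_{k-1}$. I would handle this either by restricting the statement to elements on which $\varepsilon$ is constant (the generic case, covered by the assumption recalled just before the lemma), or by noting that the consistency error introduced on the thin interface strip is controlled by Lemma \ref{intermeas} and folded into the overall error analysis later. I expect this to be the only obstacle; once $\varepsilon$ is treated as constant per element, the argument is a routine combination of the projection definitions.
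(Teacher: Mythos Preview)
For elements away from the interface your argument is exactly the paper's: $(I-\Pi_k^\nabla)p=0$ kills the stabilization term (the paper leaves this implicit, simply writing $a_h^E(p,v_h)=(\varepsilon\,\Pi_{k-1}^0\nabla p,\Pi_{k-1}^0\nabla v_h)_{0,E}$), then $\Pi_{k-1}^0\nabla p=\nabla p$ and the $L^2$-orthogonality of $\Pi_{k-1}^0$ against the polynomial $\varepsilon\nabla p$ finish the identity.

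Where you part ways with the paper is on interface elements $E\in\mathcal T_h^*$. The paper does \emph{not} restrict the statement to elements with constant $\varepsilon$, nor does it defer a consistency defect to be absorbed later via Lemma~\ref{intermeas}; it asserts exact $k$-consistency on every element and treats the interface case head-on. Its device is to split $E=\mathcal E_1\cup\mathcal E_2$ with $\mathcal E_i=E\cap\Omega_i$, use that $\varepsilon$ equals the constant $\varepsilon_m$ or $\varepsilon_s$ on each piece, and invoke the identity $C\,\Pi_{k-1}^0\nabla p=\Pi_{k-1}^0(C\nabla p)=C\nabla p$ for constants $C$ (this is (\ref{akc2})) to rewrite the discrete form on each sub-piece. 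Your instinct that something delicate happens here is sound: after the splitting one is still left with $(\varepsilon\nabla p,\nabla v_h-\Pi_{k-1}^0\nabla v_h)_{0,E}$, and since $\varepsilon\nabla p$ is only \emph{piecewise} polynomial on $E$ while the orthogonality of $\Pi_{k-1}^0$ is stated against $\mathcal P_{k-1}(E)$ on the whole element, the final ``$=0$'' in (\ref{akc3}) is asserted rather than fully justified. Nonetheless, the paper's intended route is the direct computation on interface elements, not either of the two remedies you propose, and the lemma is used without qualification in the $H^1$ analysis (see (\ref{Th11})), so a deferral strategy would require reworking that proof.
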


\begin{proof}
For any $E \in \mathcal{T}_h\setminus\mathcal{T}_h^*$, since the coefficient $\varepsilon$ is a constant, we have
\begin{align}\label{akc1}
     a^E(p,v_h)-a_h^E(p,v_h) &= (\varepsilon \nabla p,\nabla v_h)_{0,E}-(\varepsilon \Pi_{k-1}^0 \nabla p,\Pi_{k-1}^0 \nabla v_h)_{0,E} \notag\\
     &= (\varepsilon \nabla p,\nabla v_h)_{0,E} - (\varepsilon \nabla p,\nabla v_h)_{0,E} \notag\\
     &= 0.
\end{align}

Next we consider any $E$ in $\mathcal{T}_h^*$. Let $\mathcal{E}_1:=E\cap\Omega_1$ and $\mathcal{E}_2:=E\cap\Omega_2$. Similarly, according to the properties of $\Pi_k^0$, we get that: for any constant $C$ and $p \in \mathcal{P}_k(E)$, it holds
\begin{align}\label{akc2}
C\Pi_{k-1}^0\nabla p \equiv C\nabla p \equiv \Pi_{k-1}^0(C\nabla p).
\end{align}
Thus, combining (\ref{akc2}), we obtain that: for any $E \in \mathcal{T}_h^*$,
\begin{align}\label{akc3}
    a^E(p,v_h)-a_h^E(p,v_h)
    &= (\varepsilon \nabla p,\nabla v_h)_{0,E}-(\varepsilon \Pi_{k-1}^0 \nabla p,\Pi_{k-1}^0 \nabla v_h)_{0,E} \notag\\
    &= (\varepsilon \nabla p,\nabla v_h)_{0,E} - (\varepsilon_m \Pi_{k-1}^0\nabla p,\Pi_{k-1}^0\nabla v_h)_{0,\mathcal{E}_1} \notag\\
     &- (\varepsilon_s \Pi_{k-1}^0\nabla p,\Pi_{k-1}^0\nabla v_h)_{0,\mathcal{E}_2}\notag\\
     &= (\varepsilon \nabla p,\nabla v_h)_{0,E} - (\Pi_{k-1}^0(\varepsilon_m \nabla p),\Pi_{k-1}^0\nabla v_h)_{0,\mathcal{E}_1} \notag\\
     &- (\Pi_{k-1}^0(\varepsilon_s \nabla p),\Pi_{k-1}^0\nabla v_h)_{0,\mathcal{E}_2}\notag\\
     &= (\varepsilon \nabla p,\nabla v_h)_{0,E}-(\Pi_{k-1}^0(\varepsilon \nabla p),\Pi_{k-1}^0\nabla v_h)_{0,E}\notag\\
     &=0.
\end{align}
At last, combining (\ref{akc1}) and (\ref{akc3}), we finish the proof.    $\hfill\Box$
\end{proof}
\\
\noindent

Next, we give the following properties of the operator $B$ defined in (\ref{npbed}).
\begin{lemma}\label{operatorB}
1. (cf. \cite{Chen2007The}) For $u,v\in H^1(\Omega)$, the operator $B$ is monotone in the sense that
\begin{align}\label{pro1}
     (B(u)-B(v),u-v) \geq \bar{\kappa}^2||u-v||^2 \geq 0.
\end{align}

2. (cf. \cite{Chen2007The}) The operator $B$ is bounded in the sense that for $u,v \in L^{\infty}(\Omega)$, $w \in L^2(\Omega)$,
\begin{align}\label{pro2}
     (B(u)-B(v),w) \leq C||u-v||~||w||.
  \end{align}

3. Let $u$ and $u_h$ be the solutions to (\ref{npbew}) and (\ref{nlpbed}), respectively.
{\color{black}Define the following functions associated with operator B:
\[
   \bar{B}_u:=\int_0^1B_u(u-t(u-u_h))\mathrm{d}t,~\bar{B}_{uu}:=\int_0^1B_{uu}(u-t(u-u_h))\mathrm{d}t.
\]
}
If $u_h \in L^{\infty}(\Omega)$, then it holds
\begin{align}\label{pro3}
     ||\bar{B}_u||_{0,\infty} \leq C,~||\bar{B}_{uu}||_{0,\infty} \leq C,
\end{align}
and if $u,~u_h \in W^{1,\infty}(\Omega)$, it follows that
\begin{align}\label{pro4}
     ||\bar{B}_u||_{1,\infty} \leq C.
\end{align}
\end{lemma}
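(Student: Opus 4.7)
The plan is to take parts 1 and 2 directly from \cite{Chen2007The} as the statement indicates, and focus the work on the new bounds (\ref{pro3}) and (\ref{pro4}). Starting from $B(v) = \bar{\kappa}^2 \sinh(v + G)$ I compute the derivatives $B_u(v) = \bar{\kappa}^2 \cosh(v + G)$ and $B_{uu}(v) = \bar{\kappa}^2 \sinh(v + G)$. Substituting $v = (1-t)u + tu_h$ (the rewriting of $u - t(u - u_h)$) turns the definitions of $\bar{B}_u$ and $\bar{B}_{uu}$ into one-parameter integrals over $t \in [0,1]$ of hyperbolic functions of the argument $(1-t)u + tu_h + G$, weighted by the coefficient $\bar{\kappa}^2$.

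The crucial structural fact is that $\bar{\kappa}^2 \equiv 0$ on $\Omega_m$ by (\ref{parameters}), so only the behaviour on $\bar\Omega_s$ enters the estimates. On $\bar\Omega_s$ the fundamental solution $G$ is $C^\infty$ with bounded values and bounded gradient, because its singular set $\{x_i\}$ lies inside $\Omega_m$ and is therefore separated from $\Omega_s$. For (\ref{pro3}) I would combine Lemma \ref{uboundary}, which supplies $u \in L^\infty(\Omega)$, with the hypothesis $u_h \in L^\infty(\Omega)$ to conclude that the convex combination $(1-t)u + tu_h$ is uniformly bounded, so on $\Omega_s$ the argument of $\sinh$ and $\cosh$ is uniformly bounded for every $t \in [0,1]$. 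The integrands are then pointwise bounded by an absolute constant, and integrating in $t$ yields the two $L^\infty$ estimates.

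For the $W^{1,\infty}$ bound (\ref{pro4}) I would differentiate under the integral, obtaining
\begin{equation*}
\nabla \bar{B}_u = \int_0^1 \bar{\kappa}^2 \sinh\bigl((1-t)u + tu_h + G\bigr)\bigl((1-t)\nabla u + t\nabla u_h + \nabla G\bigr)\,\mathrm{d}t
\end{equation*}
on each of $\Omega_m$ and $\Omega_s$. Under the hypothesis $u, u_h \in W^{1,\infty}(\Omega)$ together with the boundedness of $\nabla G$ on $\bar\Omega_s$, both factors in the integrand are uniformly bounded in $x$ and $t$. Combining the resulting $L^\infty$ bound on $\nabla \bar{B}_u$ with the bound on $\bar{B}_u$ already proved yields (\ref{pro4}).

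The main thing to watch is that $\bar{B}_u$ inherits the jump of $\bar{\kappa}^2$ across the interface $\Gamma$, so (\ref{pro4}) has to be read in a piecewise sense on $\Omega_m$ and $\Omega_s$ (consistent with how subsequent estimates use it, since $\bar{B}_u$ vanishes identically on $\Omega_m$). Once this interpretation is fixed, the only remaining technical point is justifying the differentiation under the integral sign, which is standard given the uniform pointwise control of the integrand and its $x$-gradient. I do not expect any deeper obstacle.
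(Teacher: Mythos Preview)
Your proposal is correct and follows essentially the same approach as the paper: both compute the derivatives $B_u = \bar\kappa^2\cosh(\cdot+G)$ and $B_{uu} = \bar\kappa^2\sinh(\cdot+G)$, exploit $\bar\kappa^2 \equiv 0$ on $\Omega_m$ to reduce the estimates to $\Omega_s$ where $G$ is smooth, use Lemma~\ref{uboundary} together with the assumed $L^\infty$ (resp.\ $W^{1,\infty}$) bounds on $u_h$ to control the hyperbolic integrands uniformly in $t$, and then integrate over $[0,1]$. Your remark on the piecewise reading of the $W^{1,\infty}$ norm across $\Gamma$ is a useful clarification that the paper leaves implicit.
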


\begin{proof}
We only need to show (\ref{pro3}) and (\ref{pro4}), since (\ref{pro1}) and (\ref{pro2})  are given in \cite{Chen2007The}. According to the definition (\ref{npbed}) of operator $B$, it is easy to know
\begin{align}\label{Buu}
     B_u(u-t(u-u_h)) = \bar{\kappa}^2cosh(u-t(u-u_h)+G),\\ B_{uu}(u-t(u-u_h)=\bar{\kappa}^2sinh(u-t(u-u_h)+G).\notag
\end{align}
     From Lemma \ref{uboundary}, the assumption $u_h \in L^{\infty}(\Omega)$ and $G \in C^{\infty}(\Omega_2)$, we obtain
     \begin{align}\label{cosh}
     ||B_u(u-t(u-u_h))||_{0,\infty,\Omega} \leq C||cosh(u-t(u-u_h)+G)||_{0,\infty,\Omega_2}
     \leq C.
     \end{align}
It follows that
\begin{align}\label{barBu}
     |\bar{B}_u| = |\int_0^1B_u(u-t(u-u_h))\mathrm{d}t| \leq C \int_0^1\mathrm{d}t \leq C.
\end{align}

Similar to (\ref{cosh}), using the properties of $sinh$, we have
\begin{align*}
     ||B_{uu}(u-t(u-u_h))||_{0,\infty,\Omega} \leq C||sinh(u-t(u-u_h)+G)||_{0,\infty,\Omega_2}
     \leq C.
\end{align*}
Therefore,
\[
     |\bar{B}_{uu}| = |\int_0^1B_{uu}(u-t(u-u_h))\mathrm{d}t| \leq C \int_0^1\mathrm{d}t \leq C.
\]

Next, the derivative of $\bar{B}_u$ is expressed as follows,
\begin{align}\label{DBu1}
     D\bar{B}_u &= \int_0^1DB_u(u-t(u-u_h))\mathrm{d}t \notag\\
     &= \int_0^1\bar{\kappa}^2sinh(u-t(u-u_h)+G)(Du-t(Du-Du_h)+DG)\mathrm{d}t.
\end{align}
Since $u,~u_h \in W^{1,\infty}(\Omega)$ and $G\in L^{\infty}(\Omega_2)$, we obtain
\begin{align}\label{DBu2}
     ||\bar{\kappa}^2sinh(u-t(u-u_h)+G)||_{0,\infty,\Omega}\leq C||sinh(u-t(u-u_h)+G)||_{0,\infty,\Omega_2}\leq C,
\end{align}
\begin{align}\label{DBu3}
     ||Du-t(Du-Du_h)+DG||_{0,\infty,\Omega_2}\leq C.
\end{align}
Combining (\ref{DBu1})-(\ref{DBu3}), we get
\begin{align}\label{DBu4}
     |D\bar{B}_u| = |\int_0^1\bar{\kappa}^2sinh(u-t(u-u_h)+G)(Du-t(Du-Du_h)+DG)\mathrm{d}t|
     \leq C\int_0^1\mathrm{d}t \leq C.
\end{align}
Thus, by (\ref{barBu}) and (\ref{DBu4}), it is easy to show (\ref{pro4}).

This completes the proof. $\hfill\Box$
\end{proof}
\\

Now using the properties of operator $B$, we can derive the following bounded property with respect to $u_h$.
\begin{lemma}\label{uhH1}
     Let $u_h$ be the solution of (\ref{nlpbed}). If $u_h \in L^{\infty}(\Omega)$, then there holds
     \begin{align*}
         ||u_h||_{1,\Omega} \leq C.
     \end{align*}
\end{lemma}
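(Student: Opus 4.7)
The plan is to test the discrete equation (\ref{nlpbed}) with $v_h = u_h$ and extract the $H^1$-bound using the coercivity of $a_h$ together with the hypothesis $u_h \in L^\infty(\Omega)$. Specifically, setting $v_h = u_h$ in (\ref{nlpbed}) and invoking (\ref{ahcc}) yields
\[
    C_* \|u_h\|_1^2 \le a_h(u_h,u_h) = (f_h,u_h) - (B_h(u_h),u_h),
\]
so the whole task reduces to bounding $|(f_h,u_h)|$ and $|(B_h(u_h),u_h)|$ by a constant times $\|u_h\|_1$ (equivalently, by a constant times $\|u_h\|_0$, since the continuous embedding $H^1_0(\Omega)\hookrightarrow L^2(\Omega)$ is trivial).

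For the source term I would use element-wise Cauchy--Schwarz and the $L^2$-stability of the projection $\Pi_k^0$ to get
\[
    |(f_h,u_h)| \le \sum_E \|f_G\|_{0,E}\,\|\Pi_k^0 u_h\|_{0,E} \le \|f_G\|_{0,\Omega}\,\|u_h\|_{0,\Omega},
\]
which is bounded by $C\|u_h\|_1$ because $f_G \in L^\infty(\Omega)$. For the nonlinear term, recall that $(B_h(u_h),u_h)_E = (\bar\kappa^2 \sinh(\Pi_k^0 u_h + G),\Pi_k^0 u_h)_{0,E}$. The hypothesis $u_h\in L^\infty(\Omega)$, combined with the $L^\infty$-stability of the polynomial $L^2$-projection on shape-regular polytopes (a standard consequence of inverse inequalities under the mesh assumptions in Section 2.2), gives a uniform bound $\|\Pi_k^0 u_h\|_{0,\infty,E} \le C$. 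Since $\bar\kappa$ is supported in $\Omega_s$ and $G \in C^\infty(\bar\Omega_s)$, it follows that $\|\bar\kappa^2 \sinh(\Pi_k^0 u_h + G)\|_{0,\infty,\Omega} \le C$, and elementwise Cauchy--Schwarz yields $|(B_h(u_h),u_h)| \le C\|u_h\|_0 \le C\|u_h\|_1$.

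Putting the two estimates together gives $C_* \|u_h\|_1^2 \le C\|u_h\|_1$, from which the conclusion $\|u_h\|_1 \le C$ follows at once. The only delicate point—and the main obstacle—is the control of the nonlinear term: one has to convert the $L^\infty$-bound hypothesis on $u_h$ into a corresponding $L^\infty$-bound on $\Pi_k^0 u_h$ uniformly in $h$, so that the fast-growing $\sinh$ nonlinearity, which would otherwise be catastrophic, is rendered harmless. Once this is in hand, the entire argument is a short energy estimate driven by coercivity.
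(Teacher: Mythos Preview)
Your proposal is correct and follows essentially the same energy-estimate strategy as the paper: test (\ref{nlpbed}) with $v_h=u_h$, invoke the coercivity (\ref{ahcc}), and bound the right-hand side by $C\|u_h\|_1$ using $f_G\in L^\infty(\Omega)$ and the hypothesis $u_h\in L^\infty(\Omega)$. The only difference is cosmetic: the paper controls the nonlinear term by applying the Lipschitz-type bound (\ref{pro2}) with $u=\Pi_k^0 u_h$ and $v=0$, whereas you bound $\bar\kappa^2\sinh(\Pi_k^0 u_h+G)$ directly in $L^\infty$; both routes require $\Pi_k^0 u_h\in L^\infty(\Omega)$ uniformly, which you justify explicitly via the $L^\infty$-stability of the polynomial $L^2$-projection and the paper leaves implicit.
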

\begin{proof}
     By (\ref{ahcc}) and taking $u = \Pi_k^0 u_h$, $v = 0$ in (\ref{pro2}), we get that
     \begin{align*}
         C_*||u_h||_{1,\Omega}^2 &\leq a_h(u_h,u_h) = (f_h,u_h)-(B_h(u_h),u_h) \\
         &\leq (||f_G||_{0,\Omega} + ||\Pi_k^0 u_h||_{0,\Omega})||u_h||_{0,\Omega} \\
         &\leq C (||f_G||_{0,\Omega} + ||u_h||_{0,\Omega})||u_h||_{1,\Omega}\\
         &\leq C||u_h||_{1,\Omega},
     \end{align*}
where the functions $f_G$ (see (\ref{npbed}) and $u_h$ are both in $L^{\infty}(\Omega)$. This completes the proof. $\hfill\Box$\\
\end{proof}

According to the classical Scott-Dupont theory (see \cite{Brenner2008The}), we have the following lemma.
\begin{lemma}\label{upilocal}(cf. \cite{Brenner2008The})
     For any $u \in H^s(E),s=1,2$, there exists a $\Pi_h u \in \mathcal{P}_k(E)$ such that
     \begin{align}\label{fem0}
         ||u-\Pi_{h}u||_{m,p,E} \leqslant Ch_E^{s-m}|u|_{s,p,E},~m=0,1,~1\leq p\leq \infty.
     \end{align}
\end{lemma}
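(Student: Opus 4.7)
The plan is to invoke the standard Scott--Dupont / averaged Taylor polynomial machinery from Chapter 4 of Brenner--Scott, adapted to general polyhedral elements. The geometric fact that makes this work uniformly in $h_E$ is the mesh assumption stated earlier: each $E$ is star-shaped with respect to a ball $B_E$ of radius at least $\gamma h_E$. Without this hypothesis one could not hope to avoid a reference-element argument, which is unavailable for arbitrary polyhedra.

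First, I would construct $\Pi_h u$ as the averaged Taylor polynomial of degree $k$ over $B_E$, namely
\[
    \Pi_h u(\boldsymbol{x}) := \int_{B_E} T^k_{\boldsymbol{y}} u(\boldsymbol{x}) \, \phi(\boldsymbol{y}) \, \mathrm{d}\boldsymbol{y},
\]
where $T^k_{\boldsymbol{y}} u(\boldsymbol{x}) = \sum_{|\boldsymbol{\alpha}| \leq k} \frac{1}{\boldsymbol{\alpha}!} D^{\boldsymbol{\alpha}} u(\boldsymbol{y})(\boldsymbol{x}-\boldsymbol{y})^{\boldsymbol{\alpha}}$ and $\phi \in C^\infty_0(B_E)$ is a normalized cutoff. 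The construction is well defined for $u \in W^{s,p}(E)$ because any derivative of order higher than $s$ appears only under an integral against $\phi$ and can be moved onto $\phi$ by integration by parts.

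Second, I would verify two basic properties: (i) $\Pi_h$ preserves polynomials of degree $\leq k$, so $\Pi_h p = p$ for every $p \in \mathcal{P}_k(E)$; and (ii) the remainder $u - \Pi_h u$ admits an integral representation in terms of only the order-$s$ derivatives $D^{\boldsymbol{\beta}} u$ with $|\boldsymbol{\beta}|=s$, integrated along the segment from $\boldsymbol{y} \in B_E$ to $\boldsymbol{x} \in E$. The star-shapedness of $E$ with respect to $B_E$ is exactly what guarantees these segments remain in $E$, so the representation is valid. Both properties are purely pointwise facts about the Taylor expansion.

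Third, I would convert this remainder formula into the stated bound. Minkowski's inequality for integrals, combined with Young's inequality to handle the convolution-type kernel (which is bounded by a Riesz potential on $E$), gives
\[
    \|u-\Pi_h u\|_{m,p,E} \leq C(\gamma) \, h_E^{s-m} \, |u|_{s,p,E}, \qquad 0 \leq m \leq s, \; 1 \leq p \leq \infty,
\]
where $C(\gamma)$ depends on the ratio $h_E/\mathrm{radius}(B_E) \leq 1/\gamma$ but is independent of $h_E$. The only technical point requiring care is the book-keeping of constants: one must track how each application of Young/Minkowski introduces factors of $\mathrm{diam}(E)/\mathrm{radius}(B_E)$, and then absorb these into $C(\gamma)$. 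This tracking, replacing the usual affine-scaling step for simplices, is the main conceptual obstacle, and it is precisely what the averaged Taylor polynomial together with the star-shaped assumption is designed to bypass.
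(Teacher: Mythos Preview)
Your proposal is correct and is precisely the Scott--Dupont averaged Taylor polynomial argument from Chapter~4 of Brenner--Scott, which is exactly what the paper invokes: the paper gives no proof of its own for this lemma but simply cites \cite{Brenner2008The}. The key ingredients you identify---the star-shapedness assumption on $E$ with respect to a ball of radius $\geq \gamma h_E$, the averaged Taylor polynomial construction, and the Bramble--Hilbert-type bound with constant depending only on the chunkiness parameter $\gamma$---are exactly the ones needed, so there is nothing to add.
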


Using Lemma \ref{upilocal}, we can derive an error estimate of the interpolant under a lower regularity of the solution, which is a tool in study of error estimate of the virtual element solution of the interface problem. To present the error estimate of the interpolant, we need to introduce the following Sobolev embedding inequality, a detailed proof of which in two-dimensions can be found in \cite{Chen1998Finite} and the same idea can be extended to 3D with no essential changes (see Lemma 3.1, in \cite{Ren1995On}).
\begin{lemma}(3D Sobolev Embedding Inequality)\label{sbi}
    For every $p > 2$ and all $\phi \in H^1(\Omega_i)$, $i=1,2$, it holds
    \begin{align*}
       ||\phi||_{0,p,\Omega_i} \leqslant Cp^{\frac{1}{2}}||\phi||_{1,\Omega_i}.
    \end{align*}
\end{lemma}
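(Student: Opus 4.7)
The plan is to reduce the inequality from $\Omega_i$ to all of $\mathbb{R}^3$ via a Sobolev extension and then to establish a sharp version of the whole-space $L^p$ embedding in which the constant grows like $p^{1/2}$. This is the three-dimensional counterpart of the two-dimensional estimate in \cite{Chen1998Finite}, with the adaptation to 3D being the content of \cite{Ren1995On}.

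First I would observe that, since $\partial\Omega$ and $\Gamma$ are of class $C^2$, both $\Omega_1$ and $\Omega_2$ are bounded Lipschitz (in fact $C^2$) subdomains of $\mathbb{R}^3$. This allows me to apply the Stein (or Calder\'{o}n) extension operator $\mathcal{E}\colon H^1(\Omega_i)\to H^1(\mathbb{R}^3)$, whose operator norm is independent of $p$. Setting $\tilde{\phi}=\mathcal{E}\phi$, the claim reduces to showing
\[
\|\tilde{\phi}\|_{0,p,\mathbb{R}^3}\;\le\;C\,p^{1/2}\,\|\tilde{\phi}\|_{1,\mathbb{R}^3},\qquad \forall\,\tilde{\phi}\in H^1(\mathbb{R}^3),
\]
since the extension only contributes a $p$-independent multiplicative constant.

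Next I would establish the whole-space inequality by combining the Gagliardo--Nirenberg interpolation
\[
\|\tilde{\phi}\|_{0,p,\mathbb{R}^3}\;\le\;C\,\|\tilde{\phi}\|_{0,\mathbb{R}^3}^{1-\alpha}\,\|\nabla\tilde{\phi}\|_{0,\mathbb{R}^3}^{\alpha},\qquad \alpha=\tfrac{3}{2}-\tfrac{3}{p},
\]
with Young's inequality $xy\le s^{-1}x^s+(s')^{-1}y^{s'}$, which converts the multiplicative bound into an additive one in the $H^1$-norm and isolates the $p$-dependence. To extract the precise $p^{1/2}$ factor I would represent $\tilde{\phi}$ via the Bessel potential and apply the Hardy--Littlewood--Sobolev inequality with sharp constants, then optimize the Young exponent, arriving at $\|\tilde{\phi}\|_{0,p,\mathbb{R}^3}\le C p^{1/2}\|\tilde{\phi}\|_{1,\mathbb{R}^3}$ along the lines of the Trudinger-type argument used in the 2D case of \cite{Chen1998Finite}.

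The hard part will be tracking the dependence of the embedding constant on $p$, since the textbook Sobolev embedding merely asserts boundedness for each fixed $p$. In the 2D setting the $p^{1/2}$ growth arises by expanding $\exp\bigl(\alpha|\tilde{\phi}|^2/\|\tilde{\phi}\|_{1}^2\bigr)$ in its Taylor series and reading off the coefficient of $|\tilde{\phi}|^{2k}$; in 3D one instead exploits the precise decay of the Bessel kernel, so the dimensional bookkeeping is genuinely different and is exactly what is carried out in \cite{Ren1995On}. Once the sharp whole-space estimate is in hand, composing with the bounded extension operator $\mathcal{E}$ transfers it back to $\Omega_i$ and completes the proof.
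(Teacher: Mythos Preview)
The paper does not actually give its own proof of this lemma; it only states the result and cites \cite{Chen1998Finite} for the two-dimensional case and \cite{Ren1995On} for the three-dimensional version. Your proposal invokes the same two references, so at the level of sources you are aligned with the paper.

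That said, the argument you sketch contains a genuine obstruction that cannot be repaired. The Gagliardo--Nirenberg inequality you write, with exponent $\alpha = \tfrac{3}{2}-\tfrac{3}{p}$, is only valid when $\alpha\le 1$, i.e.\ $2<p\le 6$. The Bessel-potential/Hardy--Littlewood--Sobolev route has the same ceiling: in three dimensions $(I-\Delta)^{-1/2}$ maps $L^2(\mathbb{R}^3)$ into $L^q(\mathbb{R}^3)$ only up to the critical exponent $q=2^*=6$, so no sharp-constant bookkeeping can produce a bound of the form $\|\tilde\phi\|_{0,p,\mathbb{R}^3}\le Cp^{1/2}\|\tilde\phi\|_{1,\mathbb{R}^3}$ once $p>6$. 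In fact a simple scaling shows the embedding itself fails: with $\phi_\epsilon(x)=\epsilon^{-1/2}\phi(x/\epsilon)$ one has $\|\phi_\epsilon\|_{1,\mathbb{R}^3}$ bounded while $\|\phi_\epsilon\|_{0,p,\mathbb{R}^3}=\epsilon^{3/p-1/2}\|\phi\|_{0,p}\to\infty$ for $p>6$, and the same blow-up occurs on any bounded domain containing a ball.

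So the gap is not merely in your proof but in the statement you were asked to prove: in dimension three the inequality cannot hold for all $p>2$, and the paper's downstream use of it (choosing $p=4|\log h|\to\infty$ in Lemma~\ref{globalpi}) relies precisely on the forbidden range. The two-dimensional mechanism from \cite{Chen1998Finite}, where $H^1$ embeds into every $L^p$ with the Trudinger-type growth $p^{1/2}$, is genuinely a borderline-dimension phenomenon and does \emph{not} carry over to $\mathbb{R}^3$ ``with no essential changes'' as the paper asserts.
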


\setcounter{equation}{0}
\section{Error estimates}\label{errorestimates}

In this section, we shall present the $H^1$ and $L^2$ norm error estimates for the discrete solution $u_h$ under the assumption $\mathcal{T}_h$ is quasi-uniform.
{\color{black}Considering the low global regularity of the solution of problem (\ref{rrpbe}), 
	we give the following global error estimate of interpolation $\Pi_h u$ in the space $X$, which will be used in the error analysis of the virtual element approximation.
\begin{lemma}\label{globalpi}
   For any $u \in X$, there exists a $\Pi_h u \in \mathcal{P}_k(E)$ such that
   \begin{align}\label{pih0}
    (\sum_E||u-\Pi_h u||_{1,E}^2)^\frac{1}{2}\leq Ch|logh|^\frac{1}{2}||u||_X.
    \end{align}
\end{lemma}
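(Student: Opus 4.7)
The plan is to exploit the piecewise $H^{2}$ structure of $u \in X$ by splitting the sum on the left of (\ref{pih0}) into contributions from elements that do not touch the interface and elements that do, i.e.\ over $\mathcal{T}_h\setminus\mathcal{T}_h^{*}$ and $\mathcal{T}_h^{*}$. For each $E\in\mathcal{T}_h\setminus\mathcal{T}_h^{*}$, the element lies entirely in $\Omega_1$ or $\Omega_2$, so $u|_E\in H^{2}(E)$ and Lemma \ref{upilocal} with $s=2$ applied to the polynomial interpolant $\Pi_h u$ immediately yields $\|u-\Pi_h u\|_{1,E}\le C h_E |u|_{2,E}$. Summing gives $\sum_{E\notin\mathcal{T}_h^{*}}\|u-\Pi_h u\|_{1,E}^{2}\le C h^{2}\|u\|_X^{2}$, which is better than what we need.

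The real work is on the interface elements. For $E\in\mathcal{T}_h^{*}$ I would use a Stein-type extension operator to extend $u|_{\Omega_i}$ from $\Omega_i$ to a global function $\tilde u_i\in H^{2}(\Omega)$ with $\|\tilde u_i\|_{2,\Omega}\le C\|u\|_{2,\Omega_i}$ for $i=1,2$. On $E$, by Lemma \ref{intermeas} one of the two pieces, call it $E_{\mathrm{small}}$, has $|E_{\mathrm{small}}|\le C h_E^{4}$; choose $\tilde u$ to equal $\tilde u_i$ where $E_i$ is the large side, and define $\Pi_h u\big|_E:=\Pi_h\tilde u\in\mathcal P_k(E)$. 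Then on the large side $u=\tilde u$, while on $E_{\mathrm{small}}$ we split
\begin{align*}
\|u-\Pi_h\tilde u\|_{1,E}^{2}
\le 2\|u-\tilde u\|_{1,E_{\mathrm{small}}}^{2}+2\|\tilde u-\Pi_h\tilde u\|_{1,E}^{2}
\le 2\bigl(\|u\|_{1,E_{\mathrm{small}}}^{2}+\|\tilde u\|_{1,E_{\mathrm{small}}}^{2}\bigr)+C h_E^{2}|\tilde u|_{2,E}^{2},
\end{align*}
the last term handled by Lemma \ref{upilocal}. Summed, the $h_E^{2}|\tilde u|_{2,E}^{2}$ piece telescopes via the extension bound into $C h^{2}\|u\|_X^{2}$.

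For the small-measure terms I would apply H\"older with exponent $p>2$:
\begin{equation*}
\|u\|_{1,E_{\mathrm{small}}}^{2}\le |E_{\mathrm{small}}|^{\,1-2/p}\|u\|_{1,p,E_{\mathrm{small}}}^{2}\le C h_E^{4(1-2/p)}\|u\|_{1,p,\Omega_i}^{2},
\end{equation*}
and similarly for $\tilde u$. Using the quasi-uniformity assumption and the fact that the interface has finite surface area, the number of interface elements is $N_{*}\le C h^{-2}$. Summing over $E\in\mathcal{T}_h^{*}$ and invoking Lemma \ref{sbi} applied to $u$ and $\nabla u$ gives $\|u\|_{1,p,\Omega_i}\le C p^{1/2}\|u\|_{2,\Omega_i}$, whence
\begin{equation*}
\sum_{E\in\mathcal{T}_h^{*}}\|u\|_{1,E_{\mathrm{small}}}^{2}\le C\,N_{*}\, h^{4(1-2/p)}\,p\,\|u\|_{2,\Omega_i}^{2}\le C\, h^{2-8/p}\,p\,\|u\|_X^{2}.
\end{equation*}
Optimising by choosing $p=|\log h|$ makes $h^{-8/p}=e^{8}$ bounded, so the right-hand side becomes $C h^{2}|\log h|\,\|u\|_X^{2}$. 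Combining with the non-interface contribution and taking square roots gives the required $C h|\log h|^{1/2}\|u\|_X$.

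The main obstacle I anticipate is the bookkeeping on the interface elements: one must choose $\Pi_h u$ element-wise through a suitable one-sided extension so that the large part of $E$ contributes nothing to $u-\tilde u$, and then pair H\"older's inequality with the 3D Sobolev embedding of Lemma \ref{sbi} at the exact exponent $p\sim|\log h|$ that balances the $h^{-8/p}$ factor coming from $N_{*}\cdot|E_{\mathrm{small}}|^{1-2/p}$ against the $p^{1/2}$ factor coming from the embedding constant. Everything else is routine Scott-Dupont theory plus standard Sobolev extension across the $C^{2}$ interface $\Gamma$.
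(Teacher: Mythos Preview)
Your proposal is correct and follows essentially the same strategy as the paper: Stein extension of $u|_{\Omega_i}$ to $\tilde u_i\in H^2(\Omega)$, standard $H^2$ interpolation on non-interface elements and on the ``large'' side of each interface element, H\"older's inequality together with the $O(h_E^4)$ measure bound of Lemma~\ref{intermeas} and the Sobolev embedding of Lemma~\ref{sbi} on the small lune, and finally optimizing $p\sim|\log h|$. The paper's bookkeeping differs only slightly---it bounds $\|u-\Pi_h u\|_{1,p,E_1}$ via Lemma~\ref{upilocal} with $s=1$ and then sums $\|u\|_{1,p,E}^2$ over $\mathcal T_h^*$ using a discrete H\"older inequality, arriving at the intermediate exponent $h^{2-4/p}p$ rather than your cruder $h^{2-8/p}p$---but after choosing $p$ proportional to $|\log h|$ both routes give exactly the same $Ch^2|\log h|$ and hence the claimed bound.
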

\begin{proof}
First, for any $u \in X $, let $ u_{i} $ be the restriction of  $u$ on $\Omega_{i}$ for $i=1,2$. Since the interface $\Gamma$ is sufficiently smooth, we can extend $u_i \in H^2(\Omega_{i})$ onto the whole domain $\Omega$ and obtain the function $\tilde{u}_{i} \in H^2(\Omega)$ such that $\tilde{u}_{i}=u_{i}$ on $\Omega_{i}$ (cf. \cite{Chen1998Finite})
   \begin{align}\label{exten_u}
        ||\tilde{u}_{i}||_{2, \Omega} \leq C ||u||_{2, \Omega_{i}},~for~i=1,2.
   \end{align}

Next, we analyze the element $E$ in $\mathcal{T}_{h}^*$ and $\mathcal{T}_h \setminus \mathcal{T}_{h}^*$ respectively, where $\mathcal{T}_h^*$ is defined in (\ref{ge}). For any element $E$ in $\mathcal{T}_h \setminus \mathcal{T}_{h}^*$, we get from the standard finite element interpolation (cf. \cite{Brenner2008The})
\begin{align}\label{pih1}
	\|u-\Pi_{h}u\|_{m,E} \leq Ch_{E}^{2-m}||u||_{2,E},~m=0,1.
\end{align}
Then we consider any element $E$ in $\mathcal{T}_{h}^*$. Decompose the error as follows:
\begin{equation}\label{pih2}
     ||u-\Pi_{h}u||_{m,E} = ||u-\Pi_{h}u||_{m,E_i} + ||u-\Pi_{h}u||_{m,E \setminus E_i},~i=1,2,
\end{equation}
where $E_i$ is defined in Lemma \ref{intermeas}. By (\ref{fememeas}), without loss of generality, we may first assume that $|E_1|\leq Ch_E^4$. For any $p > 2$ and $m=0,1$, we get
\begin{align}\label{pih4}
	\|u-\Pi_{h}u\|_{m,E_1}^2
	&\leq \sum_{|\alpha|=0}^m(\int_{E_1}|D^\alpha(u-\Pi_{h}u)|^{2 \cdot \frac{p}{2}})^{\frac{2}{p}} (\int_{E_1} 1^{2 \cdot \frac{p}{p-2}})^{\frac{p-2}{p}} \notag\\
	&\leq Ch_{E}^{\frac{4(p-2)}{p}} \sum_{|\alpha|=0}^m||D^\alpha (u-\Pi_{h}u)||_{0,p,E_1}^2~~~(\text{by~(\ref{fememeas})}) \notag\\
	&\leq Ch_{E}^{\frac{4(p-2)}{p}} (\sum_{|\alpha|=0}^m||D^\alpha (u-\Pi_{h}u)||_{0,p,E_1}^{2 \cdot \frac{p}{2}})^{\frac{2}{p}} (\sum_{|\alpha|=0}^{m} 1^{\frac{p}{p-2}})^{\frac{p-2}{p}} \notag\\
	&\leq Ch_{E}^{\frac{4(p-2)}{p}} ||u-\Pi_{h}u||_{m,p,E_1}^2 \notag\\
    &\leq Ch_{E}^{\frac{4(p-2)}{p}+2(1-m)}||u||_{1,p,E}^2,
\end{align}
where we have used Lemma (\ref{upilocal}) in the last inequality. Furthermore, due to the quasi-uniformity of $\mathcal{T}_h$, we have the fact that
\[
\sum_{E \in \mathcal{T}_{E}^*}1\leq Ch^{-2}.
\]
Then using the discrete H\"older inequality, it yields
\begin{align}\label{pih6}
	\sum_{E\in \mathcal{T}_{E}^*}||u||_{1,p,E}^2 &\leq (\sum_{E\in \mathcal{T}_{E}^*} ||u||_{1,p,E}^{2 \cdot \frac{p}{2}})^{\frac{2}{p}} (\sum_{E\in \mathcal{T}_{E}^*} 1^{\frac{p}{p-2}})^{\frac{p-2}{p}}\notag \\
	&\leq Ch^{-\frac{2(p-2)}{p}} (\sum_{E\in \mathcal{T}_{E}^*} ||u||_{1,p,E}^p)^{\frac{2}{p}} \notag\\
	&\leq Ch^{-\frac{2(p-2)}{p}} ||u||_{1,p,\Omega}^2.
\end{align}
Combining (\ref{pih6}) and (\ref{pih4}), we obtain that
\begin{align}\label{pih7}
\sum_{E\in \mathcal{T}_{E}^*}||u-\Pi_h u||_{m,E_1}^2
&\leq C\sum_{E\in \mathcal{T}_{E}^*}h_{E}^{\frac{4(p-2)}{p}+2(1-m)}||u||_{1,p,E}^2 \notag\\
&\leq Ch^{4-\frac{4}{p}-2m} ||u||_{1,p,\Omega}^2.
\end{align}
On the other hand, using the extention $\tilde{u}_i$ of $u_i$ that for $m=0,1$, we get that
\begin{align}\label{pih3}
    ||u-\Pi_h u||_{m,E\setminus E_1}
    &= ||\tilde{u}_i-\Pi_h \tilde{u}_i||_{m,E\setminus E_1} \notag\\
    &\leq ||\tilde{u}_i-\Pi_h \tilde{u}_i||_{m,E} \notag\\
    &\leq Ch_{E}^{4-2m}||\tilde{u}_i||_{2,E}.
\end{align}
So for the region $\mathcal{T}_h^*$, from (\ref{pih2}), (\ref{pih7}) and (\ref{pih3}), we get
\begin{align}\label{pih8}
	\sum_{E\in \mathcal{T}_h^*}||u-\Pi_h u||_{m,E}^2
	&\leq Ch^{4-2m}\sum_{E\in \mathcal{T}_h^*}||\tilde{u}_i||_{2,E}^2 + Ch^{4-\frac{4}{p}-2m}||u||_{1,p,\Omega}^2 \notag\\
	&\leq Ch^{4-2m}||\tilde{u}_i||_{2,\Omega}^2 + Ch^{4-\frac{4}{p}-2m} ||u||_{1,p,\Omega}^2.
\end{align}

From Lemma (\ref{sbi}) and (\ref{exten_u}), it is easy to know that
\begin{align}\label{pih9}
 ||\tilde{u}_{i}||_{2,\Omega} \leq C||u||_{2,\Omega_{i}} \leq C||u||_{X},
\end{align}
and
\begin{align}\label{pih9.1}
 ||u||_{1,p,\Omega}^p &= ||\tilde{u}_1||_{1,p,\Omega_1}^p + ||\tilde{u}_2||_{1,p,\Omega_2}^p,\notag\\
&=\sum_{i=1}^2 (||\tilde{u}_i||_{0,p,\Omega_{i}}^p + ||\nabla \tilde{u}_i||_{0,p,\Omega_{i}}^p)\notag\\
&\leq Cp^{\frac{p}{2}}\sum_{i=1}^2(||\tilde{u}_i||_{1,\Omega_{i}}^p + ||\nabla \tilde{u}_i||_{1,\Omega_{i}}^p)\notag \\
&\leq Cp^{\frac{p}{2}}||u||_{X}^p.
\end{align}
Combining (\ref{pih8})-(\ref{pih9.1}), we obtain that
\begin{align}\label{pih10}
\sum_{E\in \mathcal{T}_h^*}||u-\Pi_h u||_{m,E}^2
&\leq Ch^{4-2m}||u||_{X}^2 + Ch^{4-\frac{4}{p}-2m}p||u||_{X}^2.
\end{align}
Then it follows from (\ref{pih1}) and (\ref{pih10}),
\begin{align*}
	\sum_{E}\|u-\Pi_h u\|_{m,E}^2
    &= \sum_{E \in \mathcal{T} \setminus \mathcal{T}_h^*}||u-\Pi_h u||_{m,E}^2 + \sum_{E \in \mathcal{T}_h^*}||u-\Pi_h u||_{m,E}^2\\
	&\leq Ch^{4-2m}\sum_{E \in \mathcal{T} \setminus \mathcal{T}_h^*}||u||_{2,E}^2 + Ch^{4-2m}||u||_{X}^2 + Ch^{4-\frac{4}{p}-2m}p||u||_{X}^2\\
	&\leq Ch^{4-2m}||u||_{X}^2 + Ch^{4-\frac{4}{p}-2m}p||u||_{X}^2.
\end{align*}
Thus, we conclude that for $m=0,1$ and any $p>2$,
\begin{align}\label{pih11}
    (\sum_{E}\|u-\Pi_h u\|_{m,E}^2)^\frac{1}{2} \leq Ch^{2-m}||u||_{X} + Ch^{2-m-\frac{2}{p}}p^{\frac{1}{2}}||u||_{X}.
\end{align}
At last, it is easy to show that for any fixed sufficiently small $h$, $h^{2-m-\frac{2}{p}}p^{\frac{1}{2}}$ achieves its minimum $2h^{2-m-\frac{1}{2|logh|}}|logh|^{\frac{1}{2}}$ when $p = 4|logh|$ in (\ref{pih11})
and $h^{-\frac{1}{2|logh|}}\equiv \sqrt{e}$ in $h\in (0,1)$, then we obtain for $m=0,1$ that
\[
    (\sum_{E}\|u-\Pi_h u\|_{m,E}^2)^\frac{1}{2} \leq Ch^{2-m}|logh|^{\frac{1}{2}}||u||_{X}.
\]
This completes the proof.      $\hfill\Box$\\

\end{proof}

{\color{black}Now we begin to prove the following preliminary $H^1$-norm error bound.}
\begin{theorem}\label{H1error}
Let $u$ and $u_h$ be solutions of (\ref{npbew}) and (\ref{nlpbed}), respectively. If $u_h$ is in $L^{\infty}(\Omega)$, the following error estimate holds:
\begin{align}\label{Th1}
	||u-u_h||_{1,\Omega}\leq C(h|logh|^{\frac{1}{2}}+||u-u_h||_{0,\Omega}).
\end{align}
\end{theorem}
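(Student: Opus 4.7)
The plan is to adapt the standard VEM a priori error framework (as in \cite{Cangiani2020Virtual}) to the semilinear RPBE, with three essential inputs: the low-regularity interpolation estimate of Lemma \ref{globalpi}, the $k$-consistency identity (\ref{akc}) of $a_h$, and the boundedness/Lipschitz-in-$L^2$ properties of $B$ from Lemma \ref{operatorB}. Let $u_I\in V_h$ be the VEM interpolant of $u$ (defined via the degrees of freedom on each element), and let $u_\pi$ be the element-wise polynomial with $u_\pi|_E:=\Pi_h u|_E\in\mathcal{P}_k(E)$ from Lemma \ref{globalpi}. Setting $\delta_h:=u_I-u_h\in V_h$, split $\|u-u_h\|_1\leq\|u-u_I\|_1+\|\delta_h\|_1$. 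The first term is handled by the usual VEM interpolation estimate, which in our low-regularity setting reduces to the bound $Ch|\log h|^{1/2}\|u\|_X\leq Ch|\log h|^{1/2}$ furnished by Lemma \ref{globalpi} together with $\|u\|_X\leq C$ from Lemma \ref{ux}.

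To bound $\|\delta_h\|_1$, I start from coercivity (\ref{ahcc}) and substitute the discrete equation (\ref{nlpbed}) for $a_h(u_h,\delta_h)$; then I introduce $u_\pi$, apply the $k$-consistency identity (\ref{akc}) element-wise to replace $a_h(u_\pi,\delta_h)$ by $a(u_\pi,\delta_h)$, and finally use the continuous equation (\ref{npbew}) to eliminate $a(u,\delta_h)$. This yields the splitting
\begin{equation*}
a_h(\delta_h,\delta_h)=\underbrace{a_h(u_I-u_\pi,\delta_h)}_{T_1}+\underbrace{a(u_\pi-u,\delta_h)}_{T_2}+\underbrace{(f_G-f_h,\delta_h)}_{T_3}+\underbrace{(B_h(u_h)-B(u),\delta_h)}_{T_4}.
\end{equation*}
The terms $T_1, T_2$ are controlled by continuity of $a_h, a$ together with $\|u_I-u_\pi\|_1\leq\|u-u_I\|_1+\|u-u_\pi\|_1\leq Ch|\log h|^{1/2}$, using Lemma \ref{globalpi} twice. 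For $T_3$, the $L^2$-orthogonality of $\Pi_k^0$ rewrites $T_3=\sum_E(f_G,\delta_h-\Pi_k^0\delta_h)_E$, which is bounded by $Ch\|f_G\|_0\|\delta_h\|_1$ via Lemma \ref{pinature} since $f_G\in L^\infty(\Omega)$.

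The delicate term is $T_4$. Insert $(B(u),\Pi_k^0\delta_h)_E$ to obtain
\begin{equation*}
T_4=\sum_E\bigl[(B(\Pi_k^0u_h)-B(u),\Pi_k^0\delta_h)_E+(B(u),\Pi_k^0\delta_h-\delta_h)_E\bigr].
\end{equation*}
The second inner sum is handled by $\|B(u)\|_0\leq C$ (as in the proof of Lemma \ref{ux}, using Lemma \ref{uboundary} and smoothness of $G$ on $\Omega_2$) and Lemma \ref{pinature}, yielding $Ch\|\delta_h\|_1$. For the first, property (\ref{pro2}) of $B$ (whose constant depends on $\|u\|_\infty$ and the assumed $\|u_h\|_\infty$) gives $|(B(\Pi_k^0u_h)-B(u),\Pi_k^0\delta_h)_E|\leq C\|\Pi_k^0u_h-u\|_{0,E}\|\Pi_k^0\delta_h\|_{0,E}$; then $\|\Pi_k^0u_h-u\|_{0,E}\leq\|u_h-u\|_{0,E}+\|u-\Pi_k^0u\|_{0,E}\leq\|u_h-u\|_{0,E}+Ch_E\|u\|_{1,E}$ yields $T_4\leq C(\|u-u_h\|_0+h)\|\delta_h\|_1$. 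Combining all four contributions and cancelling one factor of $\|\delta_h\|_1$ against $C_*\|\delta_h\|_1^2\leq a_h(\delta_h,\delta_h)$ delivers $\|\delta_h\|_1\leq C(h|\log h|^{1/2}+\|u-u_h\|_0)$, and the triangle inequality closes the argument.

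The main obstacle is precisely the handling of $T_4$: because $B=\bar\kappa^2\sinh(u+G)$ is only Lipschitz in the $L^2$-sense (with constant depending on the $L^\infty$ norms of $u$ and $u_h$), one cannot avoid the $\|u-u_h\|_0$ contribution purely from local polynomial approximation — it must appear on the right-hand side at this stage, to be removed later by an Aubin--Nitsche duality argument that provides the $L^2$ bound. A secondary technicality is verifying that the VEM interpolant $u_I$ (not just the piecewise polynomial $\Pi_h u$) enjoys the $Ch|\log h|^{1/2}$ bound in $\|\cdot\|_1$; this follows because the standard VEM interpolation identity estimates $\|u-u_I\|_{1,E}$ by $\|u-u_\pi\|_{1,E}$ up to a constant, after which Lemma \ref{globalpi} applies globally.
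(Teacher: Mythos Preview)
Your proof is correct and follows the same overall strategy as the paper, but with one structural difference worth recording. You introduce the global VEM interpolant $u_I\in V_h$ and the piecewise polynomial $u_\pi:=\Pi_h u$ as two separate objects, set $\delta_h:=u_I-u_h\in V_h$, and obtain a four-term decomposition $T_1,\dots,T_4$. The paper instead works directly with $e_h:=u_h-\Pi_h u$ (using only the piecewise polynomial) and arrives at three terms $III_1,III_2,III_3$; your extra term $T_1=a_h(u_I-u_\pi,\delta_h)$ is exactly the price of ensuring that the test function $\delta_h$ genuinely lies in $V_h$, so that both (\ref{npbew}) and (\ref{nlpbed}) may be invoked legitimately. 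Your handling of the nonlinear consistency $T_4$ also uses a two-piece split (inserting $(B(u),\Pi_k^0\delta_h)$) rather than the paper's three-piece split via $B(\Pi_k^0 u)$ in (\ref{Th13}); both routes give the same bound. What your approach buys is rigor---$\delta_h$ is an admissible test function, whereas the paper's $e_h=u_h-\Pi_h u$ is only piecewise in $V_h(E)$ and not globally in $V_h$, so the substitution of the weak equations in (\ref{Th11}) is formally unjustified there. The cost is the secondary technicality you flag at the end: one must transfer the low-regularity bound of Lemma~\ref{globalpi} from $\Pi_h u$ to $u_I$, which the paper avoids by never introducing $u_I$.
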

\begin{proof}
Splitting the error into the following form:
\begin{align}\label{sp1}
    u-u_h=u-\Pi_h u+\Pi_h u-u_h.
\end{align}
Then set $e_h=u_h-\Pi_h u$, and it follows from (\ref{npbew}), (\ref{nlpbed}) and (\ref{ahcc})-(\ref{akc})
\begin{align}\label{Th11}
    C\sum_E||e_h||_{1,E}^2 &\leq \sum_Ea_h^E(e_h,e_h)\notag\\
	&= \sum_Ea_h^E(u_h,e_h)-\sum_Ea_h^E(\Pi_hu,e_h)\notag\\
	&= \sum_Ea_h^E(u_h,e_h)-\sum_Ea^E(u,e_h)+\sum_Ea^E(u,e_h)-\sum_Ea^E(\Pi_hu,e_h)\notag\\
	&= \sum_E[(f_h,e_h)_E-(f_G,e_h)_{0,E}]+\sum_E[(B(u),e_h)_{0,E}-(B_h(u_h),e_h)_E]\notag\\
    &+\sum_Ea^E(u-\Pi_hu,e_h)\notag\\
	&=: III_1+III_2+III_3.
\end{align}
Next, we estimate $III_1$-$III_3$, respectively. From Lemma \ref{pinature}, we deduce that
\begin{align}\label{Th12}
	III_1 = \sum_E(f_G,\Pi_k^0e_h-e_h)_{0,E} \leq \sum_E||f_G||_{0,E}||e_h-\Pi_k^0e_h||_{0,E}\leq Ch||f_G||_{0,\Omega}(\sum_E||e_h||_{1,E}^2)^{\frac{1}{2}}.
\end{align}
Then for the second term $III_2$,
\begin{align}\label{Th13}
	(B(u),e_h)_{0,E}-(B_h(u_h),e_h)_E
	&=(B(u),e_h)_{0,E}-(B(\Pi_k^0u_h),\Pi_k^0e_h)_{0,E} \notag\\
    &=(B(u)-B(\Pi_k^0u),e_h)_{0,E}+(B(\Pi_k^0u),e_h-\Pi_k^0e_h)_{0,E}\notag\\
    &+(B(\Pi_k^0u)-B(\Pi_k^0u_h),\Pi_k^0e_h)_{0,E} \notag\\
    &=:III_{2.1}+III_{2.2}+III_{2.3}.
\end{align}
Using the Cauchy-Schwarz inequality, (\ref{pro2}) and the propoerties of operator $\Pi_k^0$ to estimate (\ref{Th13}), we obtain that
\begin{align}\label{Th14}
  III_{2.1} &= (B(u)-B(\Pi_k^0u),e_h)_{0,E}\notag\\
  &\leq C||u-\Pi_k^0u||_{0,E}||e_h||_{1,E}\notag\\
  &\leq Ch_E||u||_{1,E}||e_h||_{1,E},
\end{align}
\begin{align}\label{Th15}
  III_{2.2} &= (B(\Pi_k^0u),e_h-\Pi_k^0e_h)_{0,E}\notag\\
  &\leq C||\Pi_k^0u||_{0,E}||e_h-\Pi_k^0e_h||_{0,E}\notag\\
  &\leq Ch_E||u||_{0,E}||e_h||_{1,E},
\end{align}
and
\begin{align}\label{Th16}
  III_{2.3} &= (B(\Pi_k^0u)-B(\Pi_k^0u_h),\Pi_k^0e_h)_{0,E}\notag\\
  &\leq C||\Pi_k^0u- \Pi_k^0u_h||_{0,E}||e_h||_{1,E}\notag\\
  &\leq C||u-u_h||_{0,E}||e_h||_{1,E}.
\end{align}
Thus, we derive from (\ref{Th13})-(\ref{Th16}) that
\begin{align}\label{Th17}
	III_2 &= \sum_E[(B(u),e_h)_{0,E}-(B_h(u_h),e_h)_E]\notag\\
	&\leq Ch\sum_E||u||_{1,E}||e_h||_{1,E} + C\sum_E||u-u_h||_{0,E}||e_h||_{1,E}\notag\\
	&\leq C(h||u||_{X}+||u-u_h||_{0,\Omega}) (\sum_E||e_h||_{1,E}^2)^{\frac{1}{2}}.
\end{align}

To estimate the last term $III_3$ in (\ref{Th11}), using Cauchy-Schwarz inequality and Lemma \ref{globalpi} yield
\begin{align}\label{Th18}
    III_3 &= \sum_E(\varepsilon \nabla(u-\Pi_h u),\nabla e_h)_{0,E} \notag\\
    &\leq C\sum_E||u-\Pi_h u||_{1,E}||e_h||_{1,E} \notag\\
    &\leq Ch|logh|^{\frac{1}{2}}||u||_X (\sum_E||e_h||_{1,E}^2)^{\frac{1}{2}}.
\end{align}
Then, from (\ref{Th11})-(\ref{Th12}), (\ref{Th17})-(\ref{Th18}) and Lemma \ref{ux}, it follows that
\begin{align}\label{Th19}
	\sum_E||e_h||_{1,E}^2
	&\leq C(h||f_G||_{0,\Omega}+||u-u_h||_{0,\Omega}+h|logh|^{\frac{1}{2}}||u||_X) (\sum_E||e_h||_{1,E}^2)^{\frac{1}{2}} \notag\\
	&\leq C(h|logh|^{\frac{1}{2}}+||u-u_h||_{0,\Omega}) (\sum_E||e_h||_{1,E}^2)^{\frac{1}{2}},
\end{align}
where $f_G \in L^{\infty}(\Omega)$ is used. Now the $H^1$-error estimate (\ref{Th1}) follows from (\ref{sp1}), (\ref{Th19}) and Lemma \ref{globalpi}. This completes the proof.  $\hfill\Box$\\
\end{proof}

Next, we shall prove the following $L^2$-norm error bound.
\begin{theorem}\label{L2error}
Let $u$ and $u_h$ be the solutions of (\ref{npbew}) and (\ref{nlpbed}), respectively. If $u_h$ is in $W^{1,\infty}(\Omega)$,
there holds
\begin{align}\label{Th2}
	||u-u_h||_{0,\Omega}\leq C(h^2|\log h|+||u-u_h||_{0,\Omega}^3).
\end{align}
\end{theorem}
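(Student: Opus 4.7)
The plan is to apply an Aubin--Nitsche style duality argument adapted to the nonlinear virtual element discretization of the RPBE, following the framework of \cite{Cangiani2020Virtual}. The main ingredients are a linearized dual problem for $e:=u-u_h$, interface regularity of the dual solution in the space $X$, the $H^1$-estimate of Theorem \ref{H1error}, the interpolation estimate of Lemma \ref{globalpi}, and the $W^{1,\infty}$ control of the Lagrange-form average $\bar{B}_u$ provided by Lemma \ref{operatorB}.

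Concretely, I would introduce the dual problem: find $w\in H_0^1(\Omega)$ such that
\begin{equation*}
a(v,w)+(\bar{B}_u v,w)=(e,v),\quad\forall v\in H_0^1(\Omega).
\end{equation*}
Under the hypothesis $u_h\in W^{1,\infty}(\Omega)$, Lemma \ref{uboundary} and Lemma \ref{operatorB} imply that $\bar{B}_u\ge 0$ lies in $W^{1,\infty}(\Omega)$, so the interface regularity argument of Lemma \ref{ux} yields $w\in X$ with $\|w\|_X\le C\|e\|_0$; the 3D Sobolev embedding $H^2(\Omega_i)\hookrightarrow L^\infty(\Omega_i)$ then gives $\|w\|_{0,\infty}\le C\|e\|_0$. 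Setting $v=e$ and using the exact identity $B(u)-B(u_h)=\bar{B}_u\cdot e$ together with the continuous weak form (\ref{npbew}) tested by $w$,
\begin{equation*}
\|e\|_0^2=a(e,w)+(B(u)-B(u_h),w)=(f_G,w)-a(u_h,w)-(B(u_h),w).
\end{equation*}

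Letting $w_I\in V_h$ be the virtual element interpolant of $w$ and inserting the discrete equation $a_h(u_h,w_I)+(B_h(u_h),w_I)=(f_h,w_I)$, the right-hand side splits into four pieces:
\begin{equation*}
\|e\|_0^2=\bigl[a(e,w-w_I)+(\bar{B}_u e,w-w_I)\bigr]+(f_G-f_h,w_I)+\bigl[a_h(u_h,w_I)-a(u_h,w_I)\bigr]+\bigl[(B_h(u_h),w_I)-(B(u_h),w_I)\bigr].
\end{equation*}
For the first bracket, I would combine Theorem \ref{H1error} (for $\|e\|_1\le C(h|\log h|^{1/2}+\|e\|_0)$) with Lemma \ref{globalpi} together with its $L^2$ analogue $\|w-w_I\|_0\le Ch^2|\log h|^{1/2}\|w\|_X$, producing a contribution of order $Ch^2|\log h|\|e\|_0$ plus lower-order quadratic cross terms to be absorbed later. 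The three virtual element consistency terms are handled respectively by orthogonality of $\Pi_k^0$ together with $f_G\in L^\infty$ for $(f_G-f_h,w_I)$, by the $k$-consistency (\ref{akc}) combined with polynomial approximation for $a_h-a$, and by a Taylor expansion of $B$ around $u_h$ combined with the Lipschitz bound from Lemma \ref{operatorB} for the nonlinear consistency term. The cubic residual $\|e\|_0^3$ in (\ref{Th2}) emerges when the pointwise $O(e^2)$ Taylor remainders of the $\sinh$ nonlinearity are paired against $w$: the bound $\|w\|_{0,\infty}\le C\|e\|_0$ converts one factor of $|e|$ into $\|e\|_0$, and intermediate quadratic cross terms are absorbed into the leading $Ch^2|\log h|\|e\|_0$ and the cubic residual through Young's inequality.

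The principal obstacle is the nonlinearity of $\sinh(\cdot+G)$: one must propagate second-order Taylor information through the duality without degrading the rate. The essential technical tools are the mean-value representation $\bar{B}_u$ and its $W^{1,\infty}$ bound from Lemma \ref{operatorB}, which underpin the dual-problem regularity, together with the $L^\infty$-embedding of the dual solution, which is precisely what trades an otherwise unusable quadratic error term for the benign cubic residual. Collecting all contributions yields a squared inequality from which (\ref{Th2}) follows after dividing by $\|e\|_0$.
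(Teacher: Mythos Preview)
Your dual problem is linearized with the averaged derivative $\bar B_u$, whereas the paper uses the pointwise derivative $B_u(u)=\bar\kappa^2\cosh(u+G)$. This is not a cosmetic choice: with $\bar B_u$ the identity $B(u)-B(u_h)=\bar B_u\,e$ is \emph{exact}, so there is no second-order Taylor remainder anywhere in your four-term decomposition. Your sentence ``the cubic residual $\|e\|_0^3$ emerges when the pointwise $O(e^2)$ Taylor remainders of the $\sinh$ nonlinearity are paired against $w$'' is therefore inconsistent with your own setup --- there is nothing of that form left to pair. In the paper the cubic arises precisely because $B_u(u)e\neq B(u)-B(u_h)$, leaving the residual
\[
III_7=(B(u_h)-B(u)+B_u(u)e,\psi)=(\bar B_{uu}\,e^2,\psi),
\]
which is bounded not through $\|\psi\|_{0,\infty}$ but via H\"older, Gagliardo--Nirenberg and Sobolev in the form $\|e^2\|_0\le\|e\|_{0,3}\|e\|_{0,6}\le C\|e\|_0^{1/2}\|e\|_1^{3/2}$, followed by Theorem~\ref{H1error} and Young's inequality. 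Either switch to $B_u(u)$ in the dual and follow this route, or keep $\bar B_u$ and aim directly at $\|e\|_0\le Ch^2|\log h|$ --- but then you must actually close the estimate without the cubic safety valve, and your sketch of the consistency terms is not sharp enough for that.

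A second structural difference: the paper tests against the elementwise \emph{polynomial} projection $\Pi_h\psi$, not the VEM interpolant. Combined with $k$-consistency (Lemma~\ref{akc}) this yields $a^E(u_h,\Pi_h\psi)=a_h^E(u_h,\Pi_h\psi)$ and the exact cancellation $(f_h-f_G,\Pi_h\psi)=0$. With your $w_I\in V_h$ neither simplification is available; you would need a two-sided polynomial insertion $(a_h^E-a^E)(u_h-\Pi_hu,\,w_I-\Pi_hw)$ to reach second order in the $a$-consistency term, and a separate argument for $\sum_E(f_G,w_I-\Pi_k^0w_I)_{0,E}$, neither of which your outline provides. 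Finally, the hypothesis $u_h\in W^{1,\infty}$ is used in the paper not for dual-problem regularity (only $L^\infty$ of the zeroth-order coefficient is needed there) but to gain the extra factor of $h$ in the nonlinear consistency term $III_5$ via $\|\bar B_{u_h}-\bar B_{\mathrm{avg}}\|_{0,\infty,E}\le Ch_E\|\bar B_{u_h}\|_{1,\infty}$; your plan should invoke it at that point.
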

\begin{proof}
In order to show the $L^2$-estimate in (\ref{Th2}), we first consider the auxiliary problem: find $\psi \in H^1_0(\Omega)$, such that
\begin{align}\label{au1}
	(\varepsilon \nabla \psi,\nabla v)+(B_u(u)\psi,v)=(u-u_h,v),~\forall v\in H^1_0(\Omega),
\end{align}
where $B_u(u):=\bar{\kappa}^2\cosh(u+G)$. In fact, since $u\in L^{\infty}(\Omega)$ and $G \in L^{\infty}(\Omega_2)$, we get
\[
0 \leq ||B_u(u)||_{0,\infty,\Omega} \leq C||cosh(u+G)||_{0,\infty,\Omega_2}\leq C.
\]
Then the existence and uniqueness of the solution of problem (\ref{au1}) hold (cf. \cite{Ciarlet1978The}).

Next, we present the regularity for the solution $\psi$ of (\ref{au1}). It is easy to know
\[
C||\psi||_{1,\Omega}^2 \leq (\varepsilon \nabla \psi,\nabla v)+(B_u(u)\psi,v) = (u-u_h,\psi).
\]
Thus,
\begin{align}\label{aux1}
||\psi||_{1,\Omega} \leq C||u-u_h||_{0,\Omega}.
\end{align}
Since $\partial \Omega$ is of $C^2$, the solution $\psi$ of (\ref{au1}) satsifies (cf. Theorem \cite{Babuska1970The})
\begin{align}\label{aux2}
    ||\psi||_{2,\Omega_i} \leq C(||\psi||_{1,\Omega}+||u-u_h||_{0,\Omega})\leq C||u-u_h||_{0,\Omega},~i=1,2.
\end{align}
Finally, combining (\ref{aux1}) with (\ref{aux2}), we get that
\begin{align}\label{psiX}
||\psi||_X = ||\psi||_{1,\Omega} + ||\psi||_{2,\Omega_1} + ||\psi||_{2,\Omega_2} \leq C||u-u_h||_{0,\Omega}.
\end{align}

Now we show the $L^2$-estimate (\ref{Th2}). Taking $v=u-u_h$ in (\ref{au1}) and using (\ref{npbew}) and (\ref{nlpbed}), we have
\begin{align}\label{Th21}
    ||u-u_h||_{0,\Omega}^2
	&=(\varepsilon \nabla(u-u_h),\nabla \psi)+(B_u(u)(u-u_h),\psi)\notag\\
	&=\sum_Ea^E(u-u_h,\psi-\Pi_h\psi)+\sum_Ea^E(u-u_h,\Pi_h\psi)+(B_u(u)(u-u_h),\psi)\notag\\
	&=\sum_Ea^E(u-u_h,\psi-\Pi_h\psi)+\sum_Ea^E(u,\Pi_h\psi)-\sum_Ea_h^E(u_h,\Pi_h\psi)\notag\\
	&+(B_u(u)(u-u_h),\psi)\notag\\
	&=\sum_Ea^E(u-u_h,\psi-\Pi_h\psi)+\sum_E(f_h-f_G,\Pi_h\psi)_{0,E}+\sum_E(B_h(u_h)-B(u),\Pi_h\psi)_{0,E}\notag\\
	&+(B_u(u)(u-u_h),\psi)\notag\\
	&=\sum_Ea^E(u-u_h,\psi-\Pi_h\psi)+\sum_E(B_h(u_h)-B(u_h),\Pi_h\psi)_{0,E}\notag\\
	&+\sum_E(B(u_h)-B(u),\Pi_h\psi-\psi)_{0,E}+\sum_E(B(u_h)-B(u)+B_u(u)(u-u_h),\psi)_{0,E}\notag\\
	&=:III_4+III_5+III_6+III_7,
\end{align}
where $(f_h-f_G,\Pi_h\psi)=0$ is used. By Lemmas \ref{globalpi} and (\ref{psiX}), we can bound the term $III_4$ by
\begin{align}\label{Th22}
	III_4 &\leq C\sum_E||u-u_h||_{1,E}||\psi-\Pi_h\psi||_{1,E} \notag\\
&\leq Ch|logh|^{\frac{1}{2}}||u-u_h||_{1,\Omega}||\psi||_X \notag\\
&\leq Ch|logh|^{\frac{1}{2}}||u-u_h||_{1,\Omega}||u-u_h||_{0,\Omega}.
\end{align}

For the term $III_5$, it can be changed into the following form:
\begin{align}\label{Th23}
	III_5
    &= \sum_E(B(\Pi_k^0u_h)-B(u_h),\Pi_h\psi)_{0,E}\notag\\
    &= \sum_E((\Pi_k^0u_h - u_h)\bar{B}_{u_h},\Pi_h\psi)_{0,E},
\end{align}
where {\color{black}
\[
    B(\Pi_k^0u_h)-B(u_h)=(\Pi_k^0u_h - u_h)\int_0^1B_{u_h}(\Pi_k^0u_h-t(\Pi_k^0u_h-u_h))\mathrm{d}t=(\Pi_k^0u_h - u_h)\bar{B}_{u_h},
\]}
\[
B_{u_h}(\Pi_k^0u_h-t(\Pi_k^0u_h-u_h))=\bar{\kappa}^2cosh(\Pi_k^0u_h-t(\Pi_k^0u_h-u_h)+G).
\]
Similar to (\ref{pro4}), it is easy to show that $\bar{B}_{u_h} \in W^{1,\infty}(\Omega)$ under the condition $u_h \in W^{1,\infty}(\Omega)$. Furthermore, define a linear interpolation of $\bar{B}_{u_h}$:
\[
\bar{B}_{avg}:=\frac{1}{|E|}\int_E \bar{B}_{u_h},
\]
satisfying (cf. \cite{Verfurth1994Posteriori})
\[
    ||\bar{B}_{u_h}-\bar{B}_{avg}||_{0,\infty,E} \leq Ch_E||\bar{B}_{u_h}||_{1,\infty,\tilde{w}_E},
\]
where $\tilde{w}_E \in \Omega$ denote the union of all elements having a nonempty intersection with $E$. Then, from the property (\ref{Pik0}) of $\Pi_k^0$ and (\ref{pro3}), (\ref{Th23}) becomes
\begin{align}\label{Th23.1}
    III_5
    &= \sum_E((\Pi_k^0u_h - u_h)(\bar{B}_{u_h} - \bar{B}_{avg}),\Pi_h\psi)_{0,E} \notag\\
	&\leq C\sum_E||u_h - \Pi_k^0u_h||_{0,E}||\bar{B}_{u_h} - \bar{B}_{avg}||_{0,\infty,E}||\Pi_h\psi||_{0,E} \notag\\
	&\leq C\sum_E(||u-u_h||_{0,E} + h_E||u||_{1,E})h_E||\bar{B}_{u_h}||_{1,\infty,\tilde{w}_E}||\psi||_{0,E}\notag\\
	&\leq Ch(||u-u_h||_{1,\Omega}+h||u||_{X})||u-u_h||_{0,\Omega}.
\end{align}

For the term $III_6$, from (\ref{pro2}) and Lemma \ref{globalpi}, we get
\begin{align}\label{Th24}
	III_6 &\leq \sum_E||B(u)-B(u_h)||_{0,E} ||\Pi_h\psi-\psi||_{0,E}\notag\\
	&\leq C\sum_E||u-u_h||_{0,E}||\Pi_h\psi-\psi||_{0,E}\notag\\
	&\leq Ch|logh|^{\frac{1}{2}}||u-u_h||_{0,\Omega} ||\psi||_X\notag\\
	&\leq Ch|logh|^{\frac{1}{2}}||u-u_h||_{0,\Omega}^2.
\end{align}
For the last term $III_7$, using (\ref{pro3}),
we have
\begin{align*}
	III_7 &= (B(u_h)-B(u)+B_u(u)(u-u_h),\psi) \notag\\
	&= |(\bar{B}_{uu}(u-u_h)^2,\psi)| \notag\\
    &\leq C||(u-u_h)^2||_{0,\Omega}||\psi||_{0,\Omega}\notag\\
    &\leq C||u-u_h||_{0,3,\Omega}||u-u_h||_{0,6,\Omega}||\psi||_X,
\end{align*}
where
\[
    B(u) - B(u_h) - B_u(u)(u-u_h) = (u-u_h)^2 \int_0^1B_{uu}(u-t(u-u_h))\mathrm{d}t = (u-u_h)^2\bar{B}_{uu},
\]
and we have used the following H\"older inequality in the last inequality,
\[
   ||vw|| \leq ||v||_{0,3}||w||_{0,6}.
\]
Then, in view of the Gagliardo-Nirenberg-Sobolev inequality,
\[
||v||_{0,3} \leq C||v||_0^{\frac{1}{2}}||v||_1^{\frac{1}{2}},
\]
the Sobolev Imbedding Theorem
\[
   ||v||_{0,6} \leq C||v||_{1},
\]
and the regularity of the auxiliary problem (\ref{au1}), we have
\begin{align}\label{Th25}
    III_7
	&\leq C||u-u_h||_{0,\Omega}^{\frac{1}{2}} ||u-u_h||_{1,\Omega}^{\frac{3}{2}}||\psi||_X \notag\\
	&\leq C||u-u_h||_{0,\Omega}^{\frac{1}{2}} ||u-u_h||_{1,\Omega}^{\frac{3}{2}}||u-u_h||_{0,\Omega}.
\end{align}
Inserting (\ref{Th22}) and (\ref{Th23.1})-(\ref{Th25}) into (\ref{Th21}), it follows
\begin{align}\label{Th26}
	||u-u_h||_{0,\Omega} &\leq C(h|logh|^{\frac{1}{2}}||u-u_h||_{1,\Omega} + h(||u-u_h||_{1,\Omega}+h||u||_X)
	+ ||u-u_h||_{0,\Omega}^{\frac{1}{2}} ||u-u_h||_{1,\Omega}^{\frac{3}{2}})\notag\\
	&\leq C(h|logh|^{\frac{1}{2}}||u-u_h||_{1,\Omega} + ||u-u_h||_{0,\Omega}^{\frac{1}{2}} ||u-u_h||_{1,\Omega}^{\frac{3}{2}}+h^2||u||_{X}) \notag\\
    &=: III_8 + III_9 + Ch^2||u||_{X}.
\end{align}
Combining the result of Theorem \ref{H1error}, we get the following results. For $III_8$ in (\ref{Th26}), we get
\begin{align}
    III_8 \leq C(h^2|logh| + h|logh|^\frac{1}{2}||u-u_h||_{0,\Omega}).
\end{align}
As for term $III_9$, using the Cauchy's inequality with $\epsilon > 0$ (cf. \cite{Evans2010Partial}), we have
\begin{align}\label{Th27}
    III_9 &\leq C(\epsilon||u-u_h||_{0,\Omega}+\frac{1}{\epsilon}||u-u_h||_{1,\Omega}^3) \notag\\
    &\leq C(\epsilon||u-u_h||_{0,\Omega}+\frac{1}{\epsilon}(h|logh|^{\frac{1}{2}}+||u-u_h||_{0,\Omega})^3)\notag\\
    &=: C(\epsilon||u-u_h||_{0,\Omega}+\frac{1}{\epsilon} III_{10}).
\end{align}
For $III_{10}$ in (\ref{Th27}), it's easy to know that
\begin{align}\label{Th28}
    III_{10} &= (h|logh|^{\frac{1}{2}}+||u-u_h||_{0,\Omega})(h^2|logh|+||u-u_h||_{0,\Omega}^2+2h|logh|^{\frac{1}{2}}||u-u_h||_{0,\Omega})\notag\\
    &\leq 2(h|logh|^{\frac{1}{2}}+||u-u_h||_{0,\Omega})(h^2|logh|+||u-u_h||_{0,\Omega}^2) \notag\\
    &\leq
    2h^3|logh|^{\frac{3}{2}}+3||u-u_h||_{0,\Omega}^3+3h^2|logh|~||u-u_h||_{0,\Omega} \notag\\
    &\leq Ch^2|logh| + C||u-u_h||_{0,\Omega}^3 + Ch|logh|^{\frac{1}{2}}||u-u_h||_{0,\Omega},
\end{align}
where we have used for $h \in (0,1)$, $h|logh|^{\frac{1}{2}}>h^2|logh|>h^3|logh|^{\frac{3}{2}}$.

At last, combining (\ref{Th26}) - (\ref{Th28}) and Lemma \ref{ux}, we get
\begin{align*}
    ||u-u_h||_{0,\Omega} &\leq Ch^2|logh|+C(\epsilon+h|logh|^{\frac{1}{2}})||u-u_h||_{0,\Omega}+C||u-u_h||_{0,\Omega}^3.
\end{align*}
Choose $h$ and $\epsilon$ sufficiently small such that the term $C(\epsilon+h|logh|^{\frac{1}{2}})\ll 1$, then we get the desired estimate.
This completes the proof. $\hfill\Box$
\end{proof}
\\

In order to show the error estimates in $H^1$ and $L^2$-norms, it remains to demonstrate that $u_h$ converges to $u$.
\begin{theorem}\label{uuhconvergence}
Let $u$ and $u_h$ be solutions of (\ref{npbew}) and (\ref{nlpbed}), respectively. If $u_h$ is in $L^{\infty}(\Omega)$, the $u_h$ converges to $u$ in $H^1_0(\Omega)$.
\end{theorem}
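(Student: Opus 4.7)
The plan has three stages: obtain a uniform $H^1$-bound on $\{u_h\}$, extract a weakly convergent subsequence whose weak limit is characterised as the unique solution of the continuous problem, and finally upgrade weak plus $L^2$ convergence to strong $H^1_0$-convergence via Theorem \ref{H1error}.

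First, since $u_h \in L^{\infty}(\Omega)$ by assumption, Lemma \ref{uhH1} yields the uniform bound $\|u_h\|_{1,\Omega} \leq C$ with $C$ independent of $h$. By reflexivity of $H^1_0(\Omega)$ together with the Rellich--Kondrachov compact embedding into $L^2(\Omega)$, along any sequence $h_k \to 0$ I can extract a (non-relabeled) subsequence and find $u^{*} \in H^1_0(\Omega)$ such that $u_{h_k} \rightharpoonup u^{*}$ weakly in $H^1_0(\Omega)$ and $u_{h_k} \to u^{*}$ strongly in $L^2(\Omega)$.

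The central step is to pass to the limit in (\ref{nlpbed}) and identify $u^{*}$ with $u$. Fix $v \in C^{\infty}_c(\Omega)$ and let $v_{h_k} \in V_{h_k}$ denote its virtual element interpolant, for which standard VEM approximation gives $v_{h_k} \to v$ in $H^1_0(\Omega)$, together with $\Pi_k^0 v_{h_k} \to v$ and $\Pi_{k-1}^0 \nabla v_{h_k} \to \nabla v$ in $L^2(\Omega)$. I would then treat the three terms separately. The load term $(f_{h_k}, v_{h_k}) = \sum_E (f_G, \Pi_k^0 v_{h_k})_{0,E}$ tends to $(f_G, v)$ by Lemma \ref{pinature}. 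The nonlinear term $(B_{h_k}(u_{h_k}), v_{h_k}) = \sum_E (B(\Pi_k^0 u_{h_k}), \Pi_k^0 v_{h_k})_{0,E}$ tends to $(B(u^{*}), v)$ using the strong $L^2$-convergence $\Pi_k^0 u_{h_k} \to u^{*}$, the uniform $L^{\infty}$-control from Lemma \ref{uboundary}, and the continuity/Lipschitz bounds on $\sinh$ afforded by Lemma \ref{operatorB}. The discrete bilinear form $a_{h_k}(u_{h_k}, v_{h_k})$ tends to $a(u^{*}, v)$ by pairing the weak convergence of $\nabla u_{h_k}$ with the strong convergence of $\Pi_{k-1}^0 \nabla v_{h_k}$ on the polynomial-projection part, while the stabilisation contribution vanishes in the limit because $(I-\Pi_k^{\nabla})v_{h_k} \to 0$ in $H^1$ for smooth $v$ and $(I-\Pi_k^{\nabla})u_{h_k}$ remains uniformly bounded. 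The main obstacle will be making the passage to the limit in the VEM stabilisation and in the projection mismatch of the nonlinear term precise; this step follows the standard convergence framework of \cite{L2013Basic,Cangiani2020Virtual}, but must be checked term by term given the exponential nonlinearity.

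Collecting these limits shows $a(u^{*}, v) + (B(u^{*}), v) = (f_G, v)$ for every $v \in C^{\infty}_c(\Omega)$, hence for every $v \in H^1_0(\Omega)$ by density. Uniqueness of the solution to (\ref{npbew}) (Lemma \ref{ux}) then forces $u^{*} = u$, and since the limit is independent of the extracted subsequence, the whole family satisfies $u_h \rightharpoonup u$ in $H^1_0(\Omega)$ and $u_h \to u$ strongly in $L^2(\Omega)$, giving $\|u - u_h\|_{0,\Omega} \to 0$. Finally, substituting this into the bound of Theorem \ref{H1error}, namely $\|u - u_h\|_{1,\Omega} \leq C(h|\log h|^{1/2} + \|u - u_h\|_{0,\Omega})$, both terms on the right tend to zero as $h \to 0$, yielding $u_h \to u$ strongly in $H^1_0(\Omega)$, which is the desired conclusion.
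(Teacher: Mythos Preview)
Your approach follows the same compactness-plus-identification strategy as the paper: uniform $H^1$-bound from Lemma~\ref{uhH1}, weak/$L^2$ subsequential limit, passage to the limit in the discrete scheme against a smooth test function and its VEM interpolant, and identification of the limit with $u$ by uniqueness. For the bilinear form, the paper inserts the polynomial projection $\Pi_h u_{h_k}$ and invokes $k$-consistency (Lemma with equation~(\ref{akc})) together with Lemma~\ref{globalpi}, whereas you argue directly on the projection/stabilisation splitting of $a_{h_k}$; these are interchangeable standard techniques. One point worth noting: the paper's own proof concludes only with $\|u-u_h\|_{0,\Omega}\to 0$, while your final paragraph feeds this into Theorem~\ref{H1error} to upgrade to strong $H^1_0$-convergence, which is what the theorem actually claims --- so your outline is in fact more complete on that point. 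A small correction: the uniform $L^\infty$-control you need on $\Pi_k^0 u_{h_k}$ for the nonlinear term comes from the standing hypothesis $u_h\in L^\infty(\Omega)$, not from Lemma~\ref{uboundary} (which concerns the continuous solution $u$).
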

\begin{proof}
We follow the arguments in \cite{Cangiani2020Virtual} to present the convergence of $u_h$. From Lemma \ref{uhH1}, since $||u_h||_{1,\Omega}$ is bounded, we can choose a subsequence $u_{h_k}$ such that for some $w \in H^1_0(\Omega)$, $u_{h_k} \rightarrow w$, weakly in $H^1_0(\Omega)$, as $h_{k} \rightarrow 0$ and, thus, strongly in $L^2(\Omega)$. Also let an abritrary $v \in C^{\infty}_0(\Omega)$ and $v_{h_k}$ a sequence in $V_{h_k}$ such that
\begin{align}\label{Th31}
	||v-v_{h_k}||_{1,\Omega} \rightarrow 0,~~~h_{k} \rightarrow 0.
\end{align}
Next, we shall prove that $w$ is the weak solution of problem (\ref{npbew}). First, there holds
\begin{align*}
	|a(w,v)+(B(w),v)&-(f_G,v)|
	\leq |a(w,v-v_{h_k})|+|a(w,v_{h_k})-a_h(u_{h_k},v_{h_k})|\notag\\ &+|(B(w),v)-(B_h(u_{h_k}),v_{h_k})|+|\sum_E(f_G,\Pi_k^0v_{h_k}-v_{h_k})_{0,E}|+|(f_G,v_{h_k}-v)| \notag\\
	&\leq C||\nabla w||_{0,\Omega}||v-v_{h_k}||_{1,\Omega} + Ch_k||f_G||_{0,\Omega}||v_{h_k}||_{1,\Omega}+C||f_G||_{0,\Omega}||v-v_{h_k}||_{1,\Omega}\notag\\
	&+|a(w,v_{h_k})-a_h(u_{h_k},v_{h_k})| + |(B(w),v)-(B_h(u_{h_k}),v_{h_k})|.
\end{align*}
Then from (\ref{Th31}), $w$ is the weak solution of (\ref{npbew}) if
\begin{align}\label{Th32}
	|a(w,v_{h_k})-a_h(u_{h_k},v_{h_k})| + |(B(w),v)-(B_h(u_{h_k}),v_{h_k})| \rightarrow 0,~h_{k}\rightarrow 0,
\end{align}
To show (\ref{Th32}), we need to make the following estimates for its left-hand side. For the first term, using the Lemma \ref{globalpi}, we have
\begin{align}\label{Th33}
	|a(w,v_{h_k})-a_h(u_{h_k},v_{h_k})|
	&\leq |\sum_Ea^E(w-u_{h_k},v_{h_k})|+|\sum_E[a^E(u_{h_k},v_{h_k})-a^E_h(u_{h_k},v_{h_k})]|\notag\\
    &\leq |\sum_Ea^E(w-u_{h_k},v_{h_k})|+|\sum_Ea^E(u_{h_k}-\Pi_hu_{h_k},v_{h_k})| \notag\\
	&+ |\sum_Ea^E_h(\Pi_hu_{h_k}-u_{h_k},v_{h_k})|\notag\\
	&\leq C||w-u_{h_k}||_{1,\Omega}||v_{h_k}||_{1,\Omega} + Ch_k|logh_k|^{\frac{1}{2}}||u_{h_k}||_{X}||v_{h_k}||_{1,\Omega}.
\end{align}
For second term, similar as the deduction of (\ref{Th17}), we can derive
\begin{align}\label{Th34}
	|(B(w),v)&-(B_h(u_{h_k}),v_{h_k})|
	\leq |(B(w),v-v_{h_k})| + |(B(w),v_{h_k})-(B_h(u_{h_k}),v_{h_k})|\notag\\
	&\leq C||B(w)||_{0,\Omega} ||v-v_{h_k}||_{1,\Omega}+C(h_k||w||_X+||w-u_{h_k}||_{0,\Omega})||v_{h_k}||_{1,\Omega}.
\end{align}
Then, using the fact that $u_{h_k} \rightarrow w$, and $v_{h_k}\rightarrow v$, we know that (\ref{Th32}) holds. Hence
\[
   a(w,v)+(B(w),v)=(f_G,v).
\]
Since $u$ is the unique solution of $(\ref{npbew})$, we get $u=w$. So, it follows that $u_h\rightarrow u$, in $L^2(\Omega)$, thus $||u-u_h||_{0,\Omega}\rightarrow 0$.  $\hfill\Box$
\end{proof}
\\

Then, from Theorems \ref{H1error} - \ref{uuhconvergence} for sufficiently small $h$, we have the following theorem.
\begin{theorem}\label{final}
Let $u$ and $u_h$ be solutions of (\ref{npbew}) and (\ref{nlpbed}), respectively. If $u_h\in W^{1,\infty}(\Omega)$, then for $h$ sufficiently small, we have
\[
	||u-u_h||_{0,\Omega}\leq Ch^2|logh|,~||u-u_h||_{1,\Omega}\leq Ch|logh|^{\frac{1}{2}}.
\]
\end{theorem}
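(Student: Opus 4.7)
The plan is to combine the three preceding theorems via a bootstrap argument, using the convergence result of Theorem \ref{uuhconvergence} to absorb the nonlinear cubic term that appears on the right-hand side of the $L^2$-bound in Theorem \ref{L2error}. The overall strategy proceeds in three stages: first establish a small-error regime from convergence, then extract the $L^2$ rate, and finally feed the $L^2$ bound back into the $H^1$ estimate from Theorem \ref{H1error}.

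First, I would invoke Theorem \ref{uuhconvergence}, which gives $\|u-u_h\|_{0,\Omega} \to 0$ as $h \to 0$. In particular, for $h$ sufficiently small we have $C\|u-u_h\|_{0,\Omega}^{2} \leq 1/2$, where $C$ is the constant appearing in the bound $\|u-u_h\|_{0,\Omega} \leq C(h^{2}|\log h| + \|u-u_h\|_{0,\Omega}^{3})$ of Theorem \ref{L2error}. Rewriting this inequality as
\begin{equation*}
\|u-u_h\|_{0,\Omega}\bigl(1 - C\|u-u_h\|_{0,\Omega}^{2}\bigr) \leq Ch^{2}|\log h|,
\end{equation*}
the parenthesized factor is bounded below by $1/2$, so we may absorb the cubic contribution into the left-hand side and obtain $\|u-u_h\|_{0,\Omega} \leq 2Ch^{2}|\log h|$, which is the desired $L^{2}$-estimate.

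Next, I would insert this $L^{2}$-estimate into the conclusion of Theorem \ref{H1error}, which asserts $\|u-u_h\|_{1,\Omega} \leq C(h|\log h|^{1/2} + \|u-u_h\|_{0,\Omega})$. Since $h^{2}|\log h| \leq h|\log h|^{1/2}$ for all sufficiently small $h \in (0,1)$ (because $h|\log h|^{1/2} \to 0$ slower than $h^{2}|\log h|$), the two contributions combine to give $\|u-u_h\|_{1,\Omega} \leq Ch|\log h|^{1/2}$, which is the claimed $H^{1}$-rate.

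The main obstacle is really the justification of the absorption step, which crucially requires the qualitative convergence $\|u-u_h\|_{0,\Omega} \to 0$ guaranteed by Theorem \ref{uuhconvergence}; without it, one cannot rule out that $\|u-u_h\|_{0,\Omega}$ sits on the other branch of the cubic inequality. Once convergence is in hand, the remainder is a short, essentially algebraic argument chaining the three preceding results, so no substantial new estimate needs to be derived here.
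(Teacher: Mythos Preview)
Your proposal is correct and is exactly the argument the paper intends: the paper gives no separate proof of Theorem \ref{final} but simply states that it follows ``from Theorems \ref{H1error}--\ref{uuhconvergence} for sufficiently small $h$'', which is precisely the bootstrap you describe---use the qualitative $L^2$-convergence from Theorem \ref{uuhconvergence} to absorb the cubic term in Theorem \ref{L2error}, then feed the resulting $L^2$-rate into Theorem \ref{H1error}.
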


From the deductions of Theorems \ref{H1error} and \ref{L2error},
if there is no interface element, i.e.,
$\mathcal{T}_h^* = \emptyset$, then we have the following corollary.
\begin{corollary}\label{coro}
      Assume $\mathcal{T}_h^* = \emptyset$. Under the same assumpting of Theorem \ref{uuhconvergence}, for $h$ sufficiently small, we have
\[
	||u-u_h||_{0,\Omega} + h||u-u_h||_{1,\Omega} \leq Ch^2.
\]
\end{corollary}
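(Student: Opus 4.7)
The plan is to trace through the proofs of Theorems \ref{H1error} and \ref{L2error} and identify exactly where the logarithmic factor $|\log h|^{1/2}$ is introduced, then show that the assumption $\mathcal{T}_h^\ast = \emptyset$ eliminates this factor. The factor comes solely from the global interpolation bound in Lemma \ref{globalpi}, whose proof splits the sum over elements into $\mathcal{T}_h \setminus \mathcal{T}_h^\ast$ (where the clean estimate (\ref{pih1}) applies) and $\mathcal{T}_h^\ast$ (where the $L^p$-H\"older trick forces the choice $p = 4|\log h|$). When $\mathcal{T}_h^\ast = \emptyset$ the second part is vacuous, so the standard Scott--Dupont bound (\ref{pih1}) applies on every element and yields
\[
    \Bigl(\sum_E \|u - \Pi_h u\|_{m,E}^2\Bigr)^{1/2} \le C h^{2-m} \|u\|_X,\qquad m = 0,1,
\]
with no logarithmic penalty.

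Next, I would revisit the $H^1$-estimate in Theorem \ref{H1error} verbatim, replacing every invocation of Lemma \ref{globalpi} by the cleaner bound above. The terms $III_1$ and $III_2$ already carry a plain factor of $h$, and only $III_3$ produced $h|\log h|^{1/2}$; it now becomes $Ch\|u\|_X(\sum_E \|e_h\|_{1,E}^2)^{1/2}$. Consequently (\ref{Th19}) sharpens to
\[
    \|u-u_h\|_{1,\Omega} \le C\bigl(h + \|u-u_h\|_{0,\Omega}\bigr).
\]

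For the $L^2$-estimate I would rerun the duality argument of Theorem \ref{L2error}. The auxiliary solution $\psi$ still satisfies (\ref{psiX}), so $\|\psi\|_X \le C\|u-u_h\|_{0,\Omega}$. Using the logarithm-free interpolation estimate in bounding $III_4$, $III_5$, $III_6$ removes all $|\log h|^{1/2}$ factors, and $III_7$ is handled exactly as before. The analog of (\ref{Th26}) reduces to
\[
    \|u-u_h\|_{0,\Omega} \le C\bigl( h\|u-u_h\|_{1,\Omega} + \|u-u_h\|_{0,\Omega}^{1/2}\|u-u_h\|_{1,\Omega}^{3/2} + h^2\|u\|_X\bigr).
\]
Substituting the improved $H^1$-bound and using Cauchy's inequality with a small parameter $\epsilon$ gives
\[
    \|u-u_h\|_{0,\Omega} \le C\bigl(h^2 + (\epsilon + h)\|u-u_h\|_{0,\Omega} + \|u-u_h\|_{0,\Omega}^3\bigr).
\]

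The only subtlety, and what I expect to be the main obstacle, is absorbing the cubic term $\|u-u_h\|_{0,\Omega}^3$ into the left-hand side. For this I would invoke Theorem \ref{uuhconvergence}, which guarantees $\|u-u_h\|_{0,\Omega} \to 0$ as $h \to 0$; hence for sufficiently small $h$ we have $C\|u-u_h\|_{0,\Omega}^2 \le \tfrac{1}{4}$ and also $C(\epsilon + h) \le \tfrac{1}{4}$ for an appropriate choice of $\epsilon$. Kicking these two terms to the left yields $\|u-u_h\|_{0,\Omega} \le C h^2$, and feeding this back into the sharpened $H^1$-bound produces $\|u-u_h\|_{1,\Omega} \le C h$, which together give the claimed estimate.
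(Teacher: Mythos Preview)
Your proposal is correct and matches the paper's approach exactly: the paper does not give a separate proof of the corollary but simply states that it follows ``from the deductions of Theorems \ref{H1error} and \ref{L2error}'' once $\mathcal{T}_h^\ast=\emptyset$, which is precisely the trace-through you carry out. One minor point worth noting is that your use of the $III_5$ bound from Theorem \ref{L2error} actually requires $u_h\in W^{1,\infty}(\Omega)$ (as in Theorem \ref{final}) rather than merely $u_h\in L^\infty(\Omega)$ as stated in the corollary's hypothesis, but this discrepancy is inherited from the paper itself.
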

\section{Numerical Results}\setcounter{equation}{0}

In this section, we present the numerical results to illustrate the theoretical results obtained in Section \ref{errorestimates}. The code is written in Fortran 90 and all the computations are carried out on the Ubuntu 18.04.6 LTS with GNU/Linux 4.15.0-193-generic x86\_64. To implement the virtual element method, we refer to \cite{L2017The}.
\begin{example}
    We solve the following regularized Poisson-Boltzmann equation:
\begin{equation}\label{example}
\begin{aligned}
    -\nabla\cdot(\varepsilon\nabla u)+\bar{\kappa}^2\sinh(u+G) &= f + \nabla \cdot((\varepsilon-\varepsilon_{m})\nabla G),\quad\text{in}\;\Omega,\\
    u &= 0,~~\text{on}~\partial \Omega,
\end{aligned}
\end{equation}
where the $f$ is a given function and the domain $\Omega$ is described specifically by
\[
   \Omega:=[0,1]^3,~\Omega_m:=[0,0.5]^3,~\Omega_s = \Omega\setminus \Omega_m.
\]
The parameters defined in (\ref{parameters}) are taken as (cf. \cite{Holst2004The})
\[
   \varepsilon_m = 2,~\varepsilon_s = 80,~\kappa = \frac{1}{20\sqrt{2}}.
\]
For simplicity of calculation, we take on singular point $\boldsymbol{x}_1=(0,0,0)$ in the Green function $G$ defined in (\ref{G}). 
\end{example}
\subsection{Meshes}
In order to observe the performance of virtual element method on different polyhedral meshes, we consider the following three meshes types in the numerical experiment, shown in Figs. \ref{th1}-\ref{th3}.
The first and second diagrams represent tetrahedral and cubic meshes, respectively. The third figure is the random Voronoi meshes where the control points of the Voronoi tessellation are randomly displaced inside the domain.


\begin{figure}[H]
 \centering
 {
  \begin{minipage}{7.5cm}
   \centering
   \includegraphics[scale=0.44]{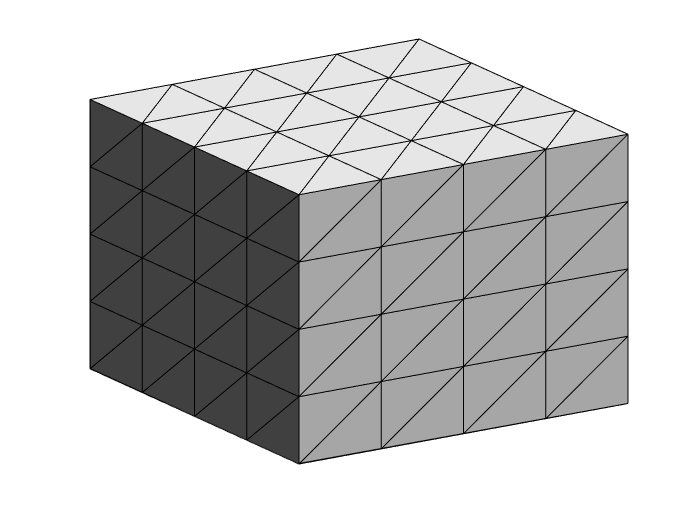}
   \caption{The tetrahedral mesh.}
   \label{th1}
  \end{minipage}
 }
    {
     \begin{minipage}{8cm}
      \centering
      \includegraphics[scale=0.44]{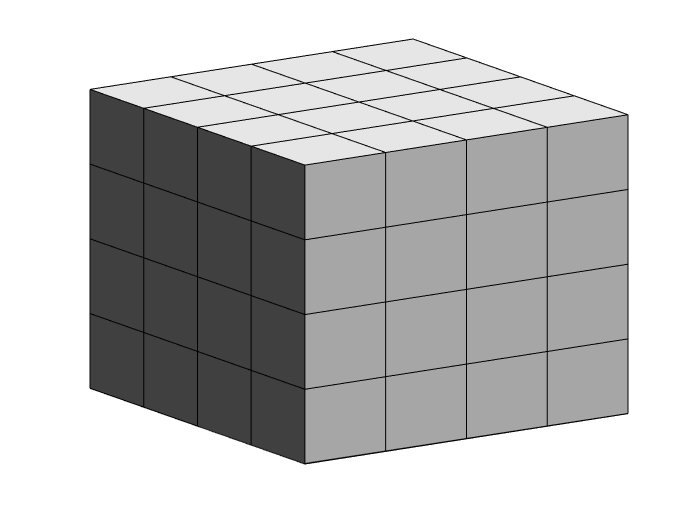}
      \caption{The cubic mesh.}
      \label{th2}
     \end{minipage}
    }
\end{figure}

\begin{figure}[H]
 \centering
 {
  \begin{minipage}{9cm}
   \centering
   \includegraphics[scale=0.48]{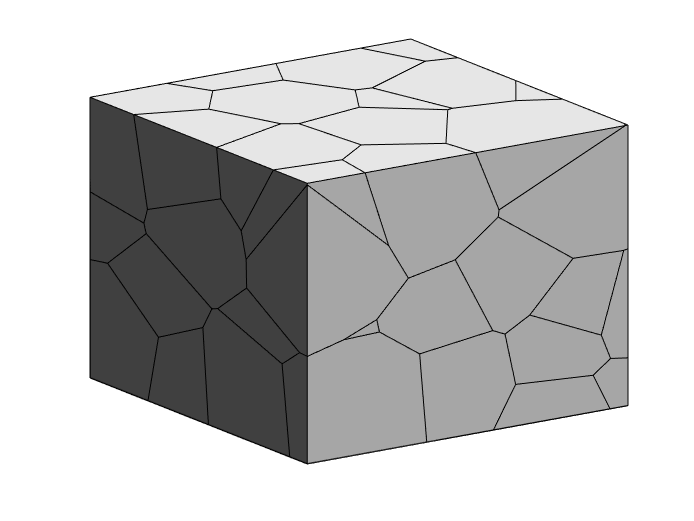}
   \caption{The Voronoi mesh.}
   \label{th3}
  \end{minipage}
 }
\end{figure}

\subsection{Error norms}
Let $u_{ex}$ be the exact solution of the problem (\ref{example}) and $u_h$ be the discrete solution provided by the virtual element method. Since the virtual element solution is not explicitly known inside the elements, in order to estimate how the virtual element solution approaches the exact one, we use the local projectors of degree $k$ on each polyhedron $E$ of the mesh, $``\Pi_k^{\nabla}u_h "$ defined in (\ref{P1.4})-(\ref{P1.6}), where $k=1$. Then we compute the following quantities:
\begin{itemize}
  \item $\mathbf{H^1-seminorm~error}$
  \[
      e_{H^1}:=\sqrt{\sum_{E\in \mathcal{T}_h}|u_{ex}-\Pi_k^{\nabla}u_h|_{1,E}^2},
  \]
  \item $\mathbf{L^2-norm~error}$
  \[
      e_{L^2}:=\sqrt{\sum_{E\in \mathcal{T}_h}||u_{ex}-\Pi_k^{\nabla}u_h||_{0,E}^2}.
  \]
\end{itemize}

For tetrahedral and cubic meshes, we choose the right-hand side $f=0$ in (\ref{example}) such that the problem (\ref{example}) is a standard nonlinear RPBE as (\ref{rrpbe}) and it has no exact solution. So we make the discrete solution ``$u_*$" on the mesh with $Dof = 129^3$($Dof$ is the degree of freedom) as the exact solution of (\ref{example}), i.e.,
\begin{align}\label{uex1}
     u_{ex}:= \Pi_k^{\nabla}u_*.
\end{align}

For testing on the random Voronoi mesh, since the coarse and fine meshes are not nested, it is inaccurate to measure the error using the ``exact" solution $u_*$ on a finer mesh. Therefore, we consider the following exact solution in (\ref{example})
\begin{align}\label{uex2}
      u_{ex} = sin(\pi x)sin(\pi y)sin(\pi z),
\end{align}
and $f:=-\nabla\cdot(\varepsilon\nabla u_{ex})+\bar{\kappa}^2\sinh(u_{ex}+G)$.

In all of the above cases, the mesh-size parameter $h$ is measured in an averaged sense (cf. \cite{L2017High-order})
\begin{align}\label{h}
      h = (\frac{|\Omega|}{N_E})^{\frac{1}{3}},
\end{align}
with $N_E$ denoting the number of polyhedrons in the mesh. The log-log figures will be plotted for the original outputs ($x$-axis denotes the mesh size $h$ and $y$-axis denotes the $L^2$ norm or $H^1$ seminorm of errors).
\subsection{Results}
In this subsection, we present the convergence results about the solution of the virtual element method with the order $k=1$. To confirm the theoretical results, we calculate the rate of convergence by using the following formula
\[
    Order:=\frac{log (e_{i+1}/e_{i})}{log (h_{i+1}/h_i)},~i=1,...,4,
\]
where the $Order$ is $L^2$-Order or $H^1$-Order and $e_{i}$ is a norm error of $e_{L^2}$ or $e_{H^1}$. The $h_i$ is defined in (\ref{h}).

In Tables $\ref{tettable}$ and $\ref{cubetable}$, we give the $L^2$-norm error ($e_{L^2}$), $H^1$-seminorm error ($e_{H^1}$) and the corresponding $L^2$ and $H^1$ error order($L^2$-Order, $H^1$-Order) on tetrahedral and cubic meshes, respectively. 
It is seen from Tables $\ref{tettable}$ and $\ref{cubetable}$ that the convergence orders in $L^2$ norm and $H^1$ norm are near second order and first order, respectively. The numerical results are also shown in Figs. \ref{tetf} and \ref{cubef}, which verify the theoretical results shown in Corollary \ref{coro}.

Fig. \ref{cutvorf} is a two-dimensional section view on the Voronoi mesh. Fig. \ref{cutvorf} indicates that there exists interface elements belong to $\mathcal{T}_h^*$ (defined in (\ref{ge})) on the Voronoi mesh. Fig. \ref{vorf} shows that the error curves of $u_h$ keep the quasi-optimal convergence order with $u_{ex}$ defined in (\ref{uex2}), which verifies the theoretical results shown in Theorem \ref{final}.


\begin{table}[H]
\centering
\caption{The error for the tetrahedral mesh with $u_{ex}$ defined in (\ref{uex1})}
\vskip 0.2cm
  \begin{tabular}{m{2.5cm}<{\centering}m{2.5cm}<{\centering}m{3.0cm}<{\centering}m{2.5cm}<{\centering}m{3.0cm}<{\centering}}
    \toprule
    $Dof$ & $e_{L^2}$ & $L^2$-Order & $e_{H^1}$ & $H^1$-Order \\
    \midrule
    $5^3$  & 5.84e-20 &   -  & 6.52e-19 & - \\
    $9^3$  & 2.68e-20 & 1.22 & 4.39e-19 & 0.57 \\
    $17^3$ & 9.19e-21 & 1.55 & 2.25e-19 & 0.96 \\
    $33^3$ & 2.70e-21 & 1.77 & 1.05e-19 & 1.11 \\
    $65^3$ & 6.29e-22 & 2.10 & 3.91e-20 & 1.42 \\
    \bottomrule
  \end{tabular}\label{tettable}
\end{table}

\begin{table}[H]
\centering
\caption{The error for the cubic mesh with $u_{ex}$ defined in (\ref{uex1})}
\vskip 0.2cm
  \begin{tabular}{m{2.5cm}<{\centering}m{2.5cm}<{\centering}m{3.0cm}<{\centering}m{2.5cm}<{\centering}m{3.0cm}<{\centering}}
    \toprule
    $Dof$ & $e_{L^2}$ & $L^2$-Order & $e_{H^1}$ & $H^1$-Order \\
    \midrule
    $5^3$  & 4.63e-20 &   -  & 3.43e-19 & - \\
    $9^3$  & 2.36e-20 & 0.97 & 2.86e-19 & 0.26 \\
    $17^3$ & 8.17e-21 & 1.53 & 1.56e-19 & 0.88 \\
    $33^3$ & 2.34e-21 & 1.80 & 7.21e-20 & 1.11 \\
    $65^3$ & 5.32e-22 & 2.15 & 2.73e-20 & 1.40 \\
    \bottomrule
  \end{tabular}\label{cubetable}
\end{table}



\begin{figure}[H]
 \centering
 {
  \begin{minipage}{8cm}
   \centering
   \includegraphics[scale=0.5]{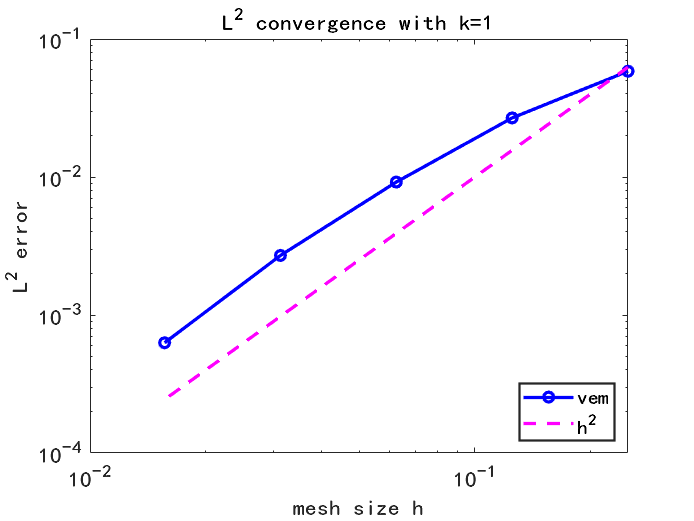}
  \end{minipage}
 }
    {
     \begin{minipage}{8cm}
      \centering
      \includegraphics[scale=0.5]{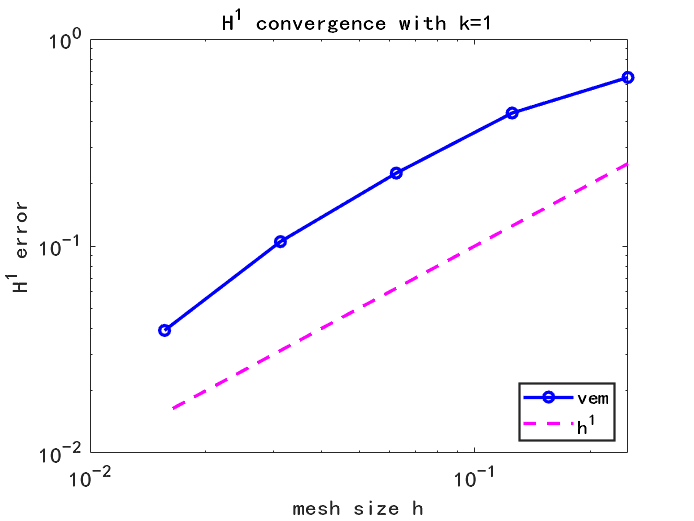}
     \end{minipage}
    }

    \caption{The $L^2$ and $H^1$ norm errors on tetrahedral mesh. The pink dotted line is a quasi-optimal convergence curve with slope $2$ (Left) or $1$ (Right). } \label{tetf}
\end{figure}

\begin{figure}[H]
 \centering
 {
  \begin{minipage}{8cm}
   \centering
   \includegraphics[scale=0.5]{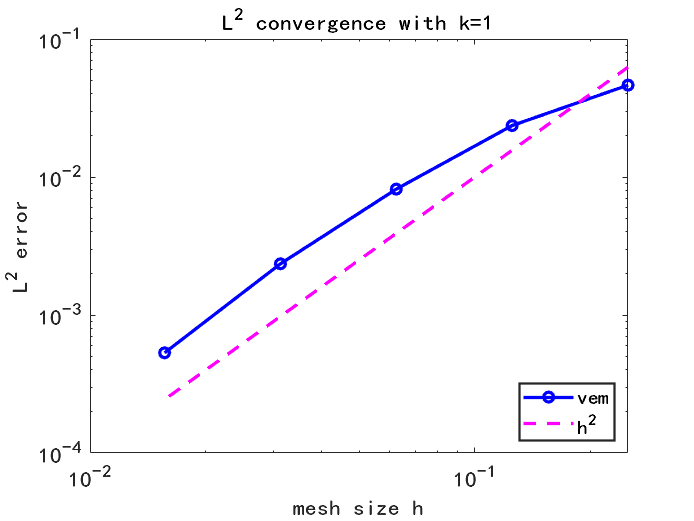}
  \end{minipage}
 }
    {
     \begin{minipage}{8cm}
      \centering
      \includegraphics[scale=0.5]{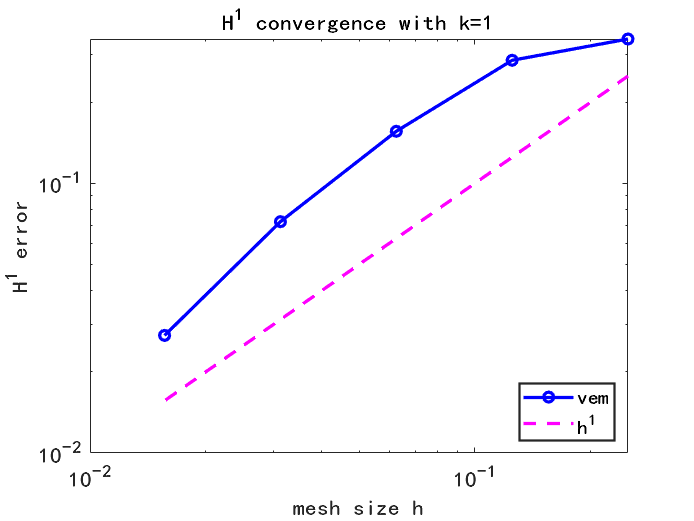}
     \end{minipage}
    }

    \caption{The $L^2$ and $H^1$ norm errors on cubic mesh. The pink dotted line is a quasi-optimal convergence curve with slope $2$ (Left) or $1$ (Right).}\label{cubef}
\end{figure}


\begin{figure}[H]
 \centering
 {
  \begin{minipage}{10cm}
   \centering
   \includegraphics[scale=0.37]{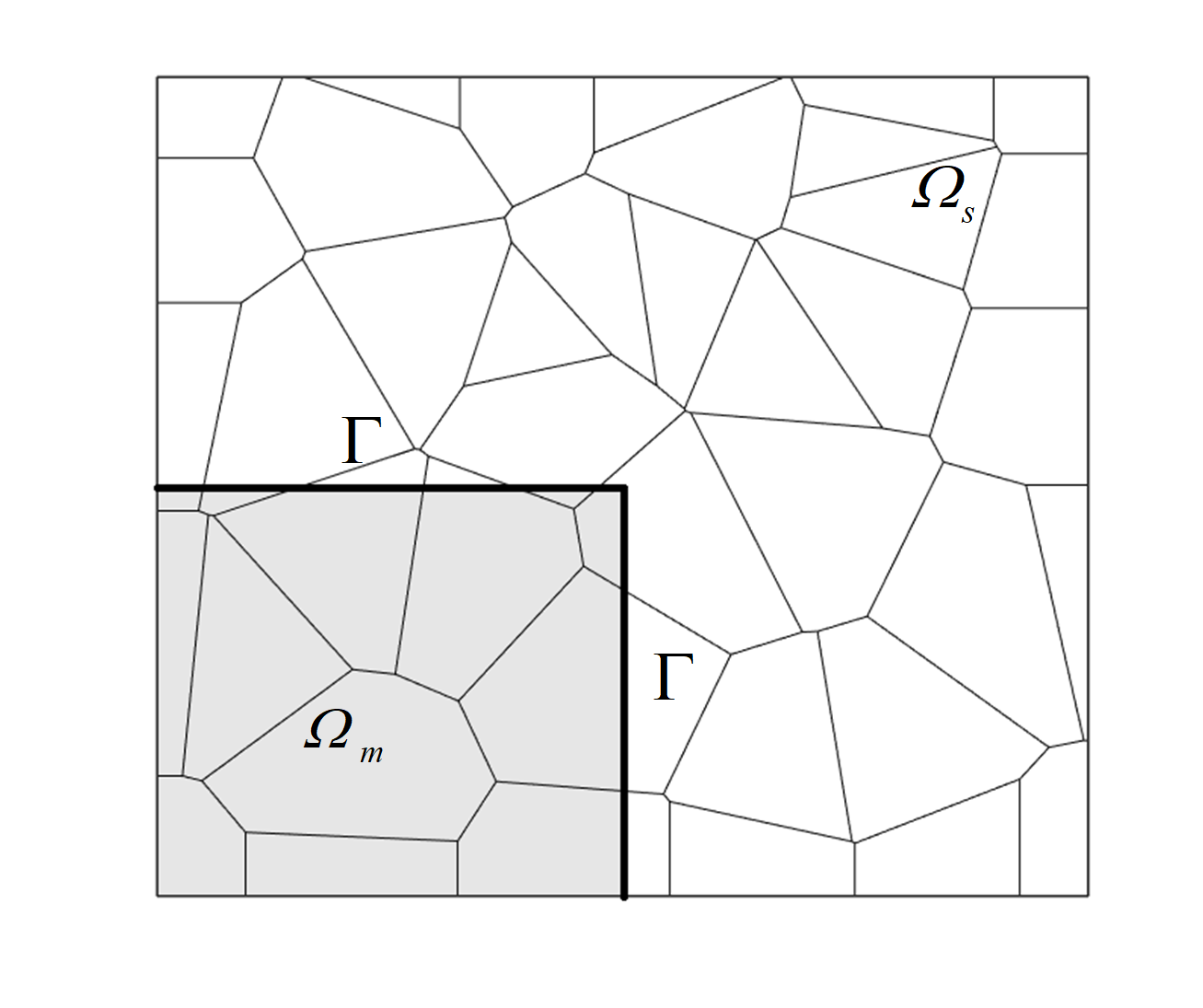}
  \end{minipage}
 }

    \caption{A 2D cut plant through the center of the simulation box along the z axis. The grey area is the 2D view of area $\Omega_m$ on the cross section and the rest is $\Omega_s$. The bold line is the interface $\Gamma$. }\label{cutvorf}
\end{figure}

\begin{figure}[H]
 \centering
 {
  \begin{minipage}{8cm}
   \centering
   \includegraphics[scale=0.5]{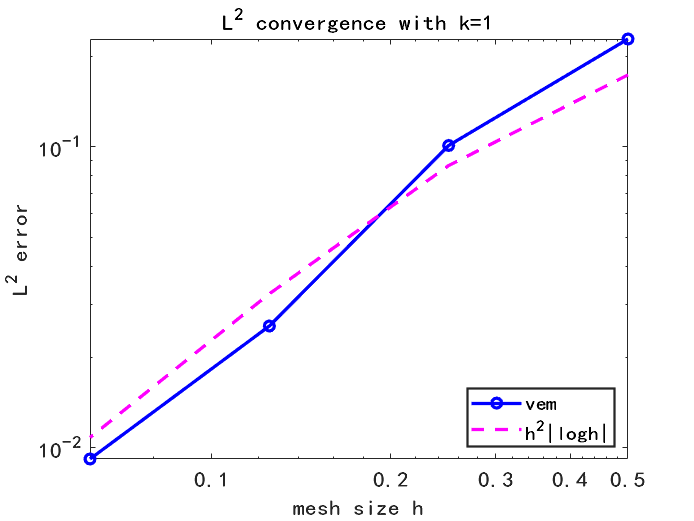}
  \end{minipage}
 }
    {
     \begin{minipage}{8cm}
      \centering
      \includegraphics[scale=0.5]{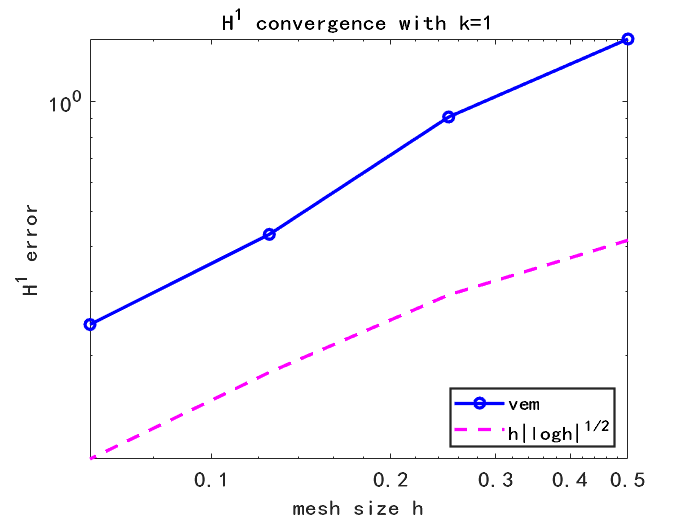}
     \end{minipage}
    }

    \caption{The $L^2$ and $H^1$ norm errors on Voronoi mesh. The pink dotted line is a quasi-optimal convergence curve shown in Theorem \ref{final}.}\label{vorf}
\end{figure}

\section{Conclusion} \label{sec-conclusion}

\noindent

In this paper, we propose a virtual element method to solve the PBE in three dimensions and present the nearly optimal error estimates in $H^1$-norm and $L^2$-norm on the general polyhedral mesh, respectively. The numerical example on different polyhedral meshes confirms the validity of the theoretical results and
shows the efficiency of the virtual element method. This method can be further applied to more complex PBE models, such as spherical interface model and biological protein molecular model, which are our future work.


\section*{Acknowledgments}
The authors would like to thank Jianhua Chen and Yang Liu for their valuable discussions on numercial experiments. Y. Yang was supported by the China NSF(NSFC12161026), Guangxi Natural Science Foundation(2020GXNSFAA159098). S. Shu was supported by the China NSF (NSFC 11971414).


\bibliographystyle{abbrv}     

\bibliography{ref}   


%
%

\end{document}